\newtheorem{thm}{Theorem}[section]
\newtheorem{prop}[thm]{Proposition}
\newtheorem{lem}[thm]{Lemma}
\newtheorem{conj}[thm]{Conjecture}
\theoremstyle{definition}
\newtheorem{defn}[thm]{Definition}
\theoremstyle{remark}
\newtheorem{remk}[thm]{Remark}
\newtheorem{remks}[thm]{Remarks}
\newtheorem{exm}[thm]{Example}
\newtheorem{exms}[thm]{Examples}
\newtheorem{notat}[thm]{Notation}
\numberwithin{equation}{section}
\newcommand{\lemref}{Lemma~\ref}
\newcommand{\sT}{{\mathcal T}}
\newcommand{\A}{{\mathbb A}}
\renewcommand{\P}{{\mathbb P}}
\newcommand{\Q}{{\mathbb Q}}
\newcommand{\Z}{{\mathbb Z}}
\newcommand{\fm}{{\mathfrak m}}
\newcommand{\fp}{{\mathfrak p}}
\newcommand{\inj}{\hookrightarrow}
\newcommand{\red}{{\rm red}}
\newcommand{\Spec}{{\rm Spec \,}}
\newcommand{\ds}{{/\kern-3pt/}}
\renewcommand{\red}{{\rm red}}
\renewcommand{\dim}{\text{\rm dim}}
\newcommand{\tuborg}{\left\{\begin{array}{ll}}
\newcommand{\sluttuborg}{\end{array}\right.}
\newcommand{\al}{\alpha}
\newcommand{\bb}{\mathbb}
\newcommand{\blob}{\bullet}
\newcommand{\bor}{\partial}
\newcommand{\into}{\hookrightarrow}
\newcommand{\isoto}{\stackrel{\simeq}{\to}}
\newcommand{\quis}{\stackrel{\sim}{\to}}
\newcommand{\roi}{\mathcal{O}}
\newcommand{\sub}[1]{{\mbox{\scriptsize #1}}}
\newcommand{\To}{\longrightarrow}
\newcommand{\xto}{\xrightarrow}
\renewcommand{\frak}{\mathfrak}
\newcommand{\indlim}{\varinjlim}
\renewcommand{\tilde}{\widetilde}
\renewcommand{\Im}{\operatorname{Im}}
\renewcommand{\ker}{\operatorname{Ker}}
\begin{document}
\title{Analogues of Gersten's conjecture for singular schemes}
\author{Amalendu Krishna, Matthew Morrow}

\address{School of Mathematics, Tata Institute of Fundamental Research,  
1 Homi Bhabha Road, Colaba, Mumbai, India}
\email{amal@math.tifr.res.in}
\address{Mathematisches Institut, Universit{\"a}t-Bonn, Endenicher Allee 60,
53115 Bonn, Germany}
\email{morrow@math.uni-bonn.de}



\maketitle

\begin{center}
To appear in {\em Selecta Mathematica}.
\end{center}

\begin{abstract}
We formulate analogues, for Noetherian local $\bb Q$-algebras which are not necessarily regular, of the injectivity part of Gersten's conjecture in algebraic $K$-theory, and prove them in various cases. 
Our results suggest that the
algebraic $K$-theory of such a ring should be detected by combining the algebraic
$K$-theory of both its regular locus and the infinitesimal 
thickenings of its singular locus.

\noindent Keywords: Gersten's conjecture, Algebraic $K$-theory, Singular schemes.

\noindent 2010 MSC: Primary 19E08; Secondary 14B05.
\end{abstract}

\section{Introduction}\label{sec:Intro}
If $A$ is a regular local ring, then Gersten's conjecture, 
which is a theorem if $A$ contains a field, predicts that the map 
$K_n(A)\to K_n({\rm Frac} A)$ is injective for all $n\ge0$, where 
${\rm Frac} A$ denotes 
the field of fractions of $A$. The aim of this note is to explore certain 
analogues of this injectivity conjecture in the case when $A$ is singular, by 
taking into account nilpotent thickenings of its singular locus. In the case 
that $A$ is one-dimensional, this problem was first considered by the first 
author when $n=2$ \cite{Krishna2005}, and later by the second author in 
general \cite{Morrow_Birelative_dim1}. In this note, we extend some of our 
earlier results to higher dimensions. We hope that our results indicate the existence of, and stimulate work towards, a more general, undiscovered framework for a form of Gersten's conjecture in the presence of singularities.

An illustrative example of such an analogous injection is our
following result for cone singularities.

\begin{thm}\label{thm:Cone-Intro}
Let $k$ be a field of characteristic zero and $Y \inj \P_k^N$ a smooth 
projective variety; let $C \inj \A_k^{N+1}$ be the cone over $Y$ and 
$(A,\frak m)$ the local ring at the unique singular point of $C$.
Then, for any $n\ge0$, the map 
\[K_n(A)\longrightarrow K_n(A/\frak m^r)\oplus K_n(\Spec(A)\setminus\{\frak m\})\] 
is injective for $r\gg 0$.
\end{thm}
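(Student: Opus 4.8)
The plan is to resolve the vertex, transport the question via pro-cdh descent to the regular blow-up, where the part of $K(A)$ not seen by $K(U)$ becomes the image of a Gysin map from $Y$, and then detect these exceptional classes in the infinitesimal thickenings; the crux is that this last step forces one into the non-homotopy-invariant part of $K$-theory, hence into cyclic homology.

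First I would let $\pi\colon\tilde X\to X:=\Spec(A)$ be the blow-up of the closed point, obtained by localising the blow-up of $C$ at its vertex. As $Y$ is smooth this blow-up is the total space of $\mathcal O_Y(-1)$, so $\tilde X$ is regular, $\pi$ is projective and an isomorphism over $U:=X\setminus\{\frak m\}$, and the exceptional divisor is $E\cong Y$ with normal bundle $\mathcal O_Y(-1)$, hence with ample conormal bundle $\mathcal O_Y(1)$. Writing $U$ also for its preimage in $\tilde X$ and comparing the localisation (Thomason--Trobaugh) sequences of $U\inj X$ and $U\inj\tilde X$, d\'evissage together with the regularity of $\tilde X$ identify the $K$-theory of $\tilde X$ with supports on $E$ with $K(Y)$, so that $\ker(K_n(\tilde X)\to K_n(U))$ is the image of the Gysin map $i_*\colon K_n(Y)\to K_n(\tilde X)$. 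Consequently any $x\in K_n(A)$ dying in $K_n(U)$ has image $i_*\eta$ in $K_n(\tilde X)$ for some $\eta\in K_n(Y)$.

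Next I would apply pro-cdh descent for $K$-theory (Kerz--Strunk--Tamme) to the abstract blow-up square, giving a homotopy-cartesian square of pro-spectra relating $K(A)$, $K(\tilde X)$, $\{K(A/\frak m^r)\}_r$ and $\{K(\tilde X_r)\}_r$, where $\tilde X_r$ is the $r$-th infinitesimal neighbourhood of $E$. Since the kernels $N_r:=\ker(K_n(A)\to K_n(A/\frak m^r)\oplus K_n(U))$ form a decreasing tower, it suffices to prove that $\{N_r\}_r$ is pro-zero; a single large $r$ then works automatically. Writing $\alpha,\beta$ for the maps from $K(A)$ to $K(\tilde X)$ and to $\{K(A/\frak m^r)\}$, feeding $x\in N_r$ through the cartesian square shows, by commutativity, that $\beta(x)=0$ forces $i_*\eta$ to restrict to $0$ in $\{K_n(\tilde X_r)\}$. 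Reading off the Mayer--Vietoris sequence, the theorem reduces to two points: (A) the composite $K_n(Y)\xrightarrow{i_*}K_n(\tilde X)\to\{K_n(\tilde X_r)\}$ is pro-injective, so that $\eta=0$ and $\alpha(x)=0$; and (B) the pro-cdh connecting map vanishes, i.e.\ $K_n(A)$ injects (pro) into $K_n(\tilde X)\oplus\{K_n(A/\frak m^r)\}$, the pro-Gersten injectivity for the regular scheme $\tilde X$.

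The \emph{main obstacle} is (A). Restricting $i_*\mathcal F$ (with $\mathcal F$ on $Y$ representing $\eta$) to $\tilde X_r$ along the Cartier divisor $E$, the derived restriction has cohomology sheaves $\mathcal F$ and $\mathcal F\otimes\mathcal O_Y(r)$, so the self-intersection enters through the ample twist $\mathcal O_Y(r)$. Crucially, one cannot detect this class in $G$-theory alone: passing to $G(\tilde X_r)\cong G(Y)$ kills, for instance, the class of a point, since twisting a skyscraper by a line bundle does not change its $G$-class. The exceptional classes must therefore be separated from zero in the genuine $K$-theory of the non-reduced $\tilde X_r$, whose difference from $G$-theory is, because $A$ is a $\mathbb Q$-algebra, computed by cyclic homology via the theorem of Goodwillie--Corti\~nas. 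I would thus reduce (A) to an explicit statement in the (negative/periodic) cyclic homology of the thickenings $\tilde X_r$, where the $\mathbb{G}_m$-grading of the cone makes the groups computable and the ampleness of $\mathcal O_Y(1)$ produces, for $r\gg0$, enough positively-graded cyclic classes to detect the image of $i_*$.

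Point (B) I expect to be the more formal of the two: it is again a $K$-to-cyclic-homology comparison, now for the regular $\tilde X$ and its thickenings, where the vanishing of the connecting term amounts to the pro-surjectivity of $K_{n+1}(\tilde X)\to\{K_{n+1}(\tilde X_r)\}$, provable along the same lines (or by invoking the regular case of a general injectivity result). Combining (A) and (B) with the reduction of the first paragraph yields the pro-vanishing of $\{N_r\}_r$, and hence the desired injectivity for $r\gg0$.
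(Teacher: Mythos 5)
Your geometric skeleton agrees with the paper's: the resolution $X$ is the localisation of the total space $\check C$ of $\mathcal{O}_Y(-1)$ along its zero section, pro-cdh descent supplies the Mayer--Vietoris sequence, and d\'evissage identifies $\ker(K_n(X)\to K_n(U))$ with the image of the Gysin map $i_*\colon K_n(Y)\to K_n(X)$. But your two key points misjudge where the difficulty lies, in opposite directions. Point (A) is false as literally stated, and the cyclic-homology detection you propose for it is unnecessary. The composite $K_n(Y)\xrightarrow{i_*}K_n(X)\to\{K_n(rY)\}_r$ cannot be injective because $i_*$ itself is not: under the homotopy-invariance isomorphism $K_n(\check C)\cong K_n(Y)$, the Gysin map becomes multiplication by $1-[\mathcal{O}_Y(1)]$, which kills, for instance, the class of a point; since $X\to\check C$ is flat and the exceptional fibre is unchanged by localisation, $i_*[\mathrm{pt}]$ is already zero in $K_n(X)$. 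So your skyscraper example does not show that one must pass to the non-reduced thickenings --- it shows that such classes die in $K_n(X)$ itself. What your argument actually needs is only $i_*\eta=0$, i.e.\ injectivity of restriction on the subgroup $\im(i_*)$, and this holds for soft reasons using the \emph{reduced} $Y$ alone: comparing the localisation sequences of $Y\inj\check C$ and $Y\inj X$ shows that any class killed on $U$ lies in the image of $K_n(\check C)\to K_n(X)$, and the composite $K_n(\check C)\to K_n(X)\to K_n(Y)$ is an isomorphism by homotopy invariance. This is exactly how the paper proves assertion (4)$_n$ in \thmref{thm:Cone-final}; no gradings, ampleness, or cyclic homology of thickenings enter. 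Your ``main obstacle'' is the easy part.

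The genuine gap is your point (B), which you set aside as ``the more formal of the two'' and propose to settle ``by invoking the regular case of a general injectivity result''; no such result exists to invoke --- (B) is assertion (1)$_n$, the main theorem of the paper. By Mayer--Vietoris, (B) is equivalent to the pro-surjectivity of $K_{n+1}(X)\oplus\{K_{n+1}(A/\fm^r)\}_r\to\{K_{n+1}(rY)\}_r$, and the obstruction is the nil-relative part $\{K_{n+1}(rY,Y)\}_r\cong\{HC_n(rY,Y)\}_r$ (Goodwillie), an enormous pro-$\Q$-vector space. Regularity of $X$ gives no leverage here, precisely because this is the non-homotopy-invariant part of $K$-theory; indeed, your reformulation of (B) as a statement purely about the regular $X$ and its thickenings conceals that the target is governed by relative cyclic homology, not by anything Gersten-like. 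The paper's proof of this surjectivity occupies its Sections \ref{section_preliminary} and \ref{sec:MR}: the pro-cdh isomorphism $\{K_n(A,\fm^r)\}_r\cong\{K_n(X,rY)\}_r$; the Soul\'e and Nesterenko--Suslin bounded-torsion results showing that $K_n^{(i)}(A,I)$ is torsion of bounded exponent for $i>n$ (\lemref{lemma_K_theory}); the pro-HKR vanishing $\{HC_n^{(i)}(rY,Y)\}_r=0$ for $i<n$ (\propref{proposition_smooth_vanishing}, where smoothness of $Y$ enters); and the key Adams-weight argument that the boundary map has image simultaneously divisible and of bounded exponent, hence zero (\lemref{lem:Main-step-1}), after which the splitting $K_{n+1}(X)\to K_{n+1}(Y)$ upgrades (2)$_n$ to (1)$_n$ via \lemref{lemma_equivalences}(iii). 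Your proposal contains no trace of this mechanism, and without it the argument does not close.
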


Unfortunately, as we shall see in Example \ref{exm:failure}, 
the conclusion of the theorem does not hold for general isolated singularities, 
even in dimension one. To obtain a conjecture which is plausible in general, 
one should replace $\Spec(A) \setminus\{\frak m\}$ by a resolution of 
singularities $X\to \Spec(A)$, and then consider $K$-groups relative to the 
exceptional locus and fibre.

\begin{conj}\label{conj:General-Intro}
Let $A$ be a Noetherian local $\Q$-algebra, $I\subseteq A$ an ideal, 
$n\in\Z$, and
\[
\xymatrix@C1pc{
Y \ar[r] \ar[d] & X \ar[d] \\
\Spec(A/I) \ar[r] & \Spec(A)}
\]
an abstract blow-up square in which $X$ is regular. 
Then the canonical map 
\[
K_n(A,I) \To K_n(A/I^r,I/I^r) \oplus 
K_n(X,Y_{\red})
\] 
is injective for $r\gg0$.
\end{conj}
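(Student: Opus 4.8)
The plan is to reduce the $K$-theoretic injectivity to a statement in cyclic homology by means of the Goodwillie--Jones trace, exploiting that we are over $\Q$. The guiding principle, visible already in the abstract, is that $K_n(X,Y_{\red})$ should capture the homotopy-invariant (``regular locus'') part of $K_n(A,I)$, while $K_n(A/I^r,I/I^r)$ should capture its infinitesimal (``cyclic homology'') part; the two targets together should therefore detect everything.

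Assume first that all schemes in sight are essentially of finite type over a field of characteristic zero, so that resolution of singularities and the cdh topology are available (the general case is discussed below). I would invoke the fundamental homotopy-cartesian square of Corti\~{n}as--Haesemeyer--Schlichting--Weibel,
\[
\xymatrix@C1pc{
K \ar[r] \ar[d] & HN \ar[d] \\
KH \ar[r] & HN^{\mathrm{cdh}},}
\]
functorially in the scheme, and pass to the terms relative to $I$ (resp.\ to $Y_{\red}$) and then to the birelative terms for the given abstract blow-up square, which preserves cartesianness. Now $KH$ is a cdh sheaf (Haesemeyer) and is nilinvariant, and $HN^{\mathrm{cdh}}$ is a cdh sheaf by construction; since the blow-up square is a cdh cover, both $KH(A,I)\to KH(X,Y_{\red})$ and $HN^{\mathrm{cdh}}(A,I)\to HN^{\mathrm{cdh}}(X,Y_{\red})$ are equivalences. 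Hence the $KH$- and $HN^{\mathrm{cdh}}$-birelative terms vanish, and the birelative trace becomes an equivalence
\[
K(A,I;X,Y_{\red}) \isoto HN(A,I;X,Y_{\red})
\]
between birelative $K$-theory and birelative negative cyclic homology. In particular $\ker\big(K_n(A,I)\to K_n(X,Y_{\red})\big)$ is the image of $HN_n(A,I;X,Y_{\red})\to K_n(A,I)$.

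For the infinitesimal target, note that $I/I^r$ is a nilpotent ideal of $A/I^r$, so Goodwillie's theorem gives an equivalence $K(A/I^r,I/I^r)\isoto HN(A/I^r,I/I^r)$; by naturality of the trace the composite $K_n(A,I)\to K_n(A/I^r,I/I^r)$ is identified, on the above kernel, with
\[
HN_n(A,I;X,Y_{\red}) \To HN_n(A,I) \To HN_n(A/I^r,I/I^r).
\]
Thus the whole conjecture reduces to the purely cyclic-homological assertion that this composite is injective for $r\gg0$. To prove it I would use that the birelative negative cyclic homology of an abstract blow-up square is ``supported along the exceptional locus'': by the formal-gluing (pro-cdh descent) square for $HN$, the birelative term depends only on the $I$-adic formal completion of $A$ and of $X$ along $Y$, and by continuity of cyclic homology for Noetherian $\Q$-algebras it is the homotopy limit of its values on the thickenings $A/I^r$. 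A Noetherian Artin--Rees / Mittag-Leffler argument then upgrades detection in the limit to injectivity at a single large $r$.

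The main obstacle is exactly this last, cyclic-homological, step --- establishing the formal-gluing and continuity statement for $HN$ with a \emph{uniform}, finite-level bound --- together with the problem of removing the finite-type hypothesis. For an arbitrary Noetherian local $\Q$-algebra neither resolution of singularities nor the cdh topology is available, so the Corti\~{n}as--Haesemeyer--Schlichting--Weibel square cannot be applied directly; one would instead have to substitute pro-cdh descent for $K$-theory (Kerz--Strunk--Tamme) and the pro-excision machinery (Land--Tamme), together with a form of the Goodwillie--Corti\~{n}as comparison valid for all $\Q$-algebras. Controlling the resulting birelative cyclic homology uniformly in $r$ in that generality is the crux, and is presumably why the statement is posed as a conjecture rather than proved in full.
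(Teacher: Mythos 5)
You have not proved the statement, and indeed the paper does not either: it is posed as a conjecture, established there only for $n\le 1$ (elementarily, Example~\ref{example01}) and, for quasi-excellent $A$, when $n=2$ or when $Y_{\red}$ is regular and $n\ge 2$ (Theorem~\ref{theorem_main}). Your opening reduction is sound in spirit and parallels the paper's first step: where you use the Corti\~nas--Haesemeyer--Schlichting--Weibel square plus Goodwillie to convert the problem into (bi)relative cyclic homology, the paper uses pro cdh descent \cite{Morrow_pro_cdh_descent} (valid for quasi-excellent $\Q$-algebras, exactly as you anticipate for the non-finite-type case) to get the pro Mayer--Vietoris sequence~\eqref{eqn:pro-ex-2}, so that the conjecture becomes the vanishing of a boundary map, and then pro HKR theorems to control the term $\{K_{n+1}(rY,Y_{\red})\}_r$ by cyclic homology. (A minor repair: your birelative term should be formed with the scheme-theoretic fibre $Y$ and corrected to $Y_{\red}$ by a further application of Goodwillie.) The genuine gap is the step you yourself defer: the claim that an Artin--Rees/Mittag--Leffler argument ``upgrades detection in the limit to injectivity at a single large $r$.'' No such formal upgrade exists --- injectivity of a map of pro abelian groups, uniformly in $r$, is precisely what must be proved, and it can fail: Example~\ref{exm:failure} of the paper exhibits a node for which the closely analogous assertion (3)$_2$ fails for \emph{every} $r$. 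Any completion of your scheme must therefore use special structure, which your sketch does not supply.

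The mechanism the paper actually uses, and which your proposal lacks entirely, is a separation of Adams weights. On one side, Propositions~\ref{proposition_smooth_vanishing} and~\ref{proposition_cyclic} (pro HKR for $\{HC_n^{(i)}(rY,Y_{\red})\}_r$, proved via N\'eron--Popescu reductions in Lemma~\ref{lemma_relative_NP}) combine with the Goodwillie isomorphism $K_{n+1}^{(i+1)}(rY,Y_{\red})\cong HC_n^{(i)}(rY,Y_{\red})$ to show that the pro group $\{K_{n+1}(rY,Y_{\red})\}_r$ --- a system of $\Q$-vector spaces --- is concentrated in weights $>n$; this is known when $Y_{\red}$ is regular, or in weight $1$ (whence $n=2$) after resolving $Y$ to a normal crossing divisor via Temkin's desingularization of quasi-excellent schemes. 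On the other side, Soul\'e and Nesterenko--Suslin give that $K_n^{(i)}(A,I)$ is torsion of bounded exponent for $i>n$ (Lemma~\ref{lemma_K_theory}). The boundary map of~\eqref{eqn:pro-ex-2} respects weights, so its image is simultaneously divisible and of bounded exponent, hence zero (Lemma~\ref{lem:Main-step-1}) --- this is the uniform-in-$r$ input your Mittag--Leffler step was meant to provide. Without a weight-separation device of this kind (or a genuinely new idea), your reduction to negative cyclic homology, while correct, leaves the conjecture exactly as open as it was.
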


The conjecture is true by elementary $K$-theory if $n=0$ or $1$; 
see Example \ref{example01}. 
We prove the following cases of this conjecture in this paper.

\begin{thm}\label{thm:Main-1}
The conjecture is true if $A$ is quasi-excellent and
\begin{enumerate}
\item $n = 2$; or
\item $Y_{\red}$ is regular and $n \ge 2$.
\end{enumerate}
\end{thm}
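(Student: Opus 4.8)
The plan is to isolate the kernel of the map by birelative methods and then to kill it using pro-cdh descent together with the characteristic-zero comparison between relative $K$-theory and cyclic homology. Write $K(A,I)=\hofib(K(A)\To K(A/I))$ and $K(X,Y_\red)=\hofib(K(X)\To K(Y_\red))$, and let $Y_r=X\times_{\Spec A}\Spec(A/I^r)$ be the scheme-theoretic thickenings, so that $(Y_r)_\red=Y_\red$. The abstract blow-up square, together with the triples $A\to A/I^r\to A/I$ and $Y_\red\inj Y_r\inj X$, produces a map of homotopy fibre sequences whose left-hand terms are $\{K(A,I^r)\}_r$ and $\{K(X,Y_r)\}_r$, whose (constant) middle terms are $K(A,I)$ and $K(X,Y_\red)$, and whose right-hand terms are $\{K(A/I^r,I/I^r)\}_r$ and $\{K(Y_r,Y_\red)\}_r$. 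Pro-cdh descent for algebraic $K$-theory (available for Noetherian $\Q$-algebras) asserts precisely that $\{K(A,I^r)\}_r\To\{K(X,Y_r)\}_r$ is an equivalence of pro-spectra, so the right-hand square
\[
\xymatrix@C1pc{ K(A,I)\ar[r]\ar[d] & \{K(A/I^r,I/I^r)\}_r \ar[d] \\ K(X,Y_\red)\ar[r] & \{K(Y_r,Y_\red)\}_r }
\]
is homotopy cartesian in pro-spectra.

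The associated Mayer--Vietoris sequence then identifies the kernel of the map $K_n(A,I)\To K_n(A/I^r,I/I^r)\oplus K_n(X,Y_\red)$ with the image of the connecting homomorphism $\partial\colon\{K_{n+1}(Y_r,Y_\red)\}_r\To K_n(A,I)$. Thus it suffices to prove that $\partial$ is pro-zero, equivalently that
\[
K_{n+1}(X,Y_\red)\oplus\{K_{n+1}(A/I^r,I/I^r)\}_r\To\{K_{n+1}(Y_r,Y_\red)\}_r
\]
is pro-surjective. Since $Y_\red\inj Y_r$ and $A/I\inj A/I^r$ are nilpotent thickenings of $\Q$-schemes, Goodwillie's theorem identifies $K_{\ast+1}(Y_r,Y_\red)\cong HC_\ast(Y_r,Y_\red)$ and $K_{\ast+1}(A/I^r,I/I^r)\cong HC_\ast(A/I^r,I/I^r)$; for the birelative contributions, Corti\~nas' excision theorem for infinitesimal $K$-theory, together with the cdh-descent of Corti\~nas--Haesemeyer--Schlichting--Weibel, supplies a compatible cyclic-homology model. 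It then suffices to show that the pullback map $\{HC_n(A/I^r,I/I^r)\}_r\To\{HC_n(Y_r,Y_\red)\}_r$ of pro-systems of cyclic homology groups is pro-surjective.

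At this point the two hypotheses enter. When $Y_\red$ is regular (case (ii)), the relative cyclic homology of the thickenings can be computed by the Hochschild--Kostant--Rosenberg theorem in terms of the K\"ahler differentials of $Y_\red$ and the nilpotent conormal data of $Y_r$; the Artin--Rees lemma on the regular scheme $X$ then controls the transition maps and yields the desired pro-surjectivity in every degree $n\ge2$, which is why no upper bound on $n$ is needed. When $Y_\red$ is arbitrary (case (i)), only the low-degree pieces $HC_0$ and $HC_1$ intervene for $n=2$; these are built from $\mathcal O$-type and $\Omega^1/d\mathcal O$-type modules, which one can still control directly --- after reducing to the regular case by a resolution of singularities of $Y_\red$, available because $A$ is quasi-excellent of characteristic zero --- again by an Artin--Rees argument. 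The higher cyclic homology of singular $Y_\red$ is not controllable in this manner, which is exactly what confines the general statement to $n=2$.

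The main obstacle is this final step: proving that the pro-map of cyclic homology groups is pro-surjective, and upgrading the pro-statement to honest surjectivity ``for $r\gg0$''. The two delicate points are (a) establishing the naturality that matches the $K$-theoretic connecting map with its cyclic-homology avatar through the Goodwillie and Corti\~nas identifications, so that the reduction to differential forms is legitimate; and (b) running the Artin--Rees estimate uniformly enough that pro-triviality becomes eventual triviality --- it is here that quasi-excellence (finiteness of modules of differentials, good behaviour of completions, and resolution of singularities for $Y_\red$) is indispensable.
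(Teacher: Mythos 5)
Your skeleton (pro cdh descent, the pro Mayer--Vietoris sequence, and the reduction of the theorem to showing that the boundary map $\partial$ is pro-zero, equivalently that the map onto $\{K_{n+1}(rY,Y_{\red})\}_r$ is pro-surjective) agrees with the paper's. The gap is in how you propose to obtain that surjectivity: you discard the summand $K_{n+1}(X,Y_{\red})$ and claim that the affine piece alone, $\{HC_n(A/I^r,I/I^r)\}_r\to\{HC_n(rY,Y_{\red})\}_r$, is pro-surjective, to be proved ``by HKR and Artin--Rees''. This stronger claim is false in general, and no Artin--Rees argument can establish it. Two concrete obstructions. First, Adams weights: the map respects the $\lambda$-decomposition, and for a commutative ring $HC_n$ is concentrated in weights $\le n$, whereas for the (typically non-affine) scheme $rY$ the group $HC_n(rY,Y_{\red})$ has components in weights up to $n+\dim Y$; these high-weight classes, and more generally classes built from higher sheaf cohomology $H^p(Y,-)$ with $p>0$, cannot lie in the image of the affine side, and nothing in your argument (nor in Proposition~\ref{proposition_smooth_vanishing}, which only treats weights $i<n$) makes them vanish. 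Second, even in low weights the claim fails at the analogous low-degree level: for the cusp $A=k[t^2,t^3]_{(t^2,t^3)}$ with resolution $B=k[t]_{(t)}$ and $I=\fm$, one has $rY=\Spec(B/t^{2r}B)$, and the map $\{HC_0(A/\fm^r,\fm/\fm^r)\}_r=\{\fm/\fm^r\}_r\to\{HC_0(rY,Y_{\red})\}_r=\{tB/t^{2r}B\}_r$ has cokernel constantly equal to $tB/t^2B\cong k$, so it is not pro-surjective; in that degree the needed surjectivity comes entirely from the other summand, $K_1(X,Y_{\red})=(1+tB)^\times$. This is exactly how the paper argues in general: Lemma~\ref{lem:Main-step-1} proves that $K_{n+1}(X,Y_{\red})\to\{K_{n+1}(rY,Y_{\red})\}_r$ is surjective, i.e.\ the regular scheme $X$, not the thickened base, supplies the missing classes.

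Moreover, your sketch never invokes the arithmetic input without which the boundary map cannot be controlled: Soul\'e's theorem that the Adams eigenspaces $K_n^{(i)}$ of a local ring are torsion of bounded exponent for $i>n$, together with Nesterenko--Suslin's identification $K_{n+1}^{(n+1)}\cong K^M_{n+1}$ modulo bounded torsion; these give Lemma~\ref{lemma_K_theory}, that $K_n^{(i)}(A,I)$ is bounded torsion for $i>n$. The paper combines this with a pure vanishing statement --- $\{K_{n+1}^{(i)}(rY,Y_{\red})\}_r=0$ for $i\le n$, obtained via Goodwillie and pro-HKR when $Y_{\red}$ is regular (Proposition~\ref{proposition_smooth_vanishing}), and via the normal crossing computation (Lemma~\ref{lem:sncd}, Proposition~\ref{proposition_cyclic}) after a further Temkin resolution when $n=2$ --- so that the image of $\partial$ is simultaneously divisible (its source is a $\Q$-vector space) and of bounded torsion, hence zero. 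Your proposal has no counterpart to this weight comparison; Artin--Rees is degree-blind and cannot explain why case (i) holds only for $n=2$. Relatedly, your reduction of case (i) ``by a resolution of singularities of $Y_{\red}$'' misses the actual difficulty: one must take an embedded resolution producing a new regular $X'$ on which the exceptional locus $Y'_{\red}$ is a normal crossing divisor --- still singular --- and it is precisely the NCD cyclic homology computation, in particular the delicate vanishing of $\{HC_2^{(1)}(rY',Y'_{\red})\}_r$, that carries the case $n=2$ and that your outline omits.
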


In both cases of the theorem, we prove the stronger assertion
that the canonical sequence of pro abelian groups 
\begin{equation*}
0\To K_n(A,I)\To \{K_n(A/I^r,I/I^r)\}_r\oplus K_n(X,Y_{\red})
\To \{K_n(r Y,Y_{\red})\}_r\To 0
\end{equation*}
is short exact, where $rY:=X\times_AA/I^r$. Indeed, pro cdh descent \cite[Theorem~3.7]{Morrow_pro_cdh_descent} for 
$K$-theory implies 
the existence of a long exact Mayer--Vietoris sequence of pro abelian groups 
\begin{equation*}
\cdots \stackrel\bor\To K_n(A,I)\To \{K_n(A/I^r,I/I^r)\}_r\oplus K_n(X,Y_{\red})
\To \{K_n(r Y,Y_{\red})\}_r\To\cdots
\end{equation*}
and therefore the injectivity assertion of the conjecture is equivalent to the 
vanishing of the boundary maps $\bor$. In the cases of the theorem we can show 
firstly, using pro Hochschild--Kostant--Rosenberg theorems, that 
$\{K_{n+1}(rY,Y_\sub{red})\}_r$ is supported in Adams degrees $>n$ and secondly, 
by classical results of Soul\'e \cite{Soule1985}
and Nesterenko--Suslin \cite{Suslin1989}, that $K_n(A,I)$ is 
supported in Adams degrees $\le n$ up to bounded torsion; see Section \ref{section_preliminary} for 
the details. This forces $\bor$ to be zero.

\subsection{Notations and hypotheses}\label{sec:Note}
We work primarily in the generality of quasi-excellent, Noetherian 
$\bb Q$-algebras, since restricting attention to finite type algebras over a 
characteristic zero field would only slightly simplify some of the proofs. All rings appearing are commutative and Noetherian.

All (relative) Hochschild, (relative) cyclic and Andr{\'e}-Quillen homology
groups will be assumed to be over $\Q$ unless we specify the base ring
explicitly. 

The Adams eigenspaces of a $K$-group $K_n(A)$ are denoted by $K_n^{(i)}(A):=\{x\in K_n(A):\psi^k(x)=k^ix\text{ for all }k\ge0\}$, and similarly for relative groups, for schemes, and for Hochschild and cyclic homology.

Pro abelian groups (always indexed over $\bb N$) are denoted by $\{G_r\}_r$. We will repeatedly use, without explicit mention, that if $G$ is an abelian group and $G\to \{G_r\}_r$ is a map of pro abelian groups (i.e., there are compatible maps $G\to G_r$ for all $r\ge 1$), then $G\to \{G_r\}_r$ is injective if and only if $G\to G_r$ is injective for $r\gg0$.

\subsection*{Acknowledgements}
The second author would like to thank the Tata Institute of Fundamental 
Research for its hospitality during a visit in November 2012.
The authors would like to thank the anonymous referee for carefully reading 
the paper and suggesting many improvements.

\section{Injectivity assertions for $K$-groups of desingularizations}\label{section_assertions}
In this section, we state various injectivity assertions
for the algebraic $K$-theory of singular local schemes in terms of their
desingularizations. We explain the relations between these assertions
and give examples showing that some of them do not hold in general.

Let $A$ be a Noetherian local ring and let $I \subset A$ be an ideal
such that $V(I)$ contains the singular locus of $\Spec(A)$.
We shall say that a commutative diagram
\begin{equation}\label{eqn:ABlow-up}
\xymatrix@C1pc{
Y \ar[r] \ar[d] & X \ar[d]^{f} \\
\Spec(A/I) \ar[r] & \Spec(A)}
\end{equation}
of schemes is {\em a resolution square}, or {\em resolution of $A$}, if and only if it is an abstract blow-up square (i.e., it is Cartesian, $f$ is proper, and the map $f: X \setminus Y \to \Spec(A) \setminus V(I)$ is an isomorphism) and $X$ is regular.

Fixing $n\in\bb Z$, we may then consider the following statements, each of which 
asserts in some sense that the $n^\sub{th}$ $K$-group $K_n$ is captured by a 
combination of generic regular and nilpotent singular data:

\begin{enumerate}
\item[(1)$_n$] the map $K_n(A)\to K_n(A/I^r)\oplus K_n(X)$ is injective for 
$r\gg0$;
\item[(2)$_n$] the map $K_n(A,I)\to K_n(A/I^r,I/I^r)\oplus K_n(X,Y_{\red})$ is 
injective for $r\gg0$;
\item[(3)$_n$] the map $K_n(A)\to K_n(A/I^r)\oplus K_n(\Spec A\setminus V(I))$ 
is injective for $r\gg0$;
\item[(4)$_n$] the map $K_n(X)\to K_n(rY)\oplus K_n(X\setminus Y)$ is 
injective for $r\gg0$, where $rY:=X\times_{A} A/I^r$.
\end{enumerate}
Although assertion (3)$_n$ does not require the existence of the resolution 
square, it appears to be difficult to study in any other way.

Associated to the resolution square there is a commutative diagram of 
pro spectra 
\begin{equation}\label{eqn:ABlow-up-K}
\xymatrix@C1pc{
K(A)\ar[r]\ar[d] & K(X)\ar[d]\\
\{K(A/I^r)\}_r\ar[r] & \{K(rY)\}_r.}
\end{equation}
We will say that the square (\ref{eqn:ABlow-up}) satisfies the 
{\em pro Mayer--Vietoris property} in $K$-theory if and only if the square (\ref{eqn:ABlow-up-K}) of 
pro spectra is homotopy Cartesian (concretely, this means that the associated 
pro abelian relative $K$-groups are isomorphic). This is known to be true if 
$A$ is a quasi-excellent $\bb Q$-algebra 
\cite[Thm.~3.7]{Morrow_pro_cdh_descent}, or 
if $A$ is essentially of finite type over an infinite perfect field having 
strong resolution of singularities \cite[Thm.~3.7]{Morrow_pro_cdh_descent}, or 
if $X\to \Spec A$ is a finite morphism (a consequence of 
\cite[Corol.~0.4]{Morrow_pro_H_unitality}), and 
conjecturally it is true in general for any abstract blow-up square of Noetherian schemes. 
Assuming that the square (\ref{eqn:ABlow-up}) satisfies the 
pro Mayer--Vietoris property in $K$-theory, there are resulting long exact 
Mayer--Vietoris sequences of pro abelian groups:
\begin{equation}\label{eqn:pro-ex-1}
\cdots\To K_n(A)\To \{K_n(A/I^r)\}_r\oplus K_n(X)\To \{K_n(r Y)\}_r\To
\cdots
\end{equation}
and
\begin{equation}\label{eqn:pro-ex-2}
\cdots\To K_n(A,I)\To \{K_n(A/I^r,I/I^r)\}_r\oplus K_n(X,Y_\sub{red})\To 
\{K_n(r Y,Y_\sub{red})\}_r\To\cdots.
\end{equation}

\begin{lem}\label{lemma_equivalences}
For a resolution of $A$ as in (\ref{eqn:ABlow-up}), and $n\in\bb Z$, the following 
implications hold:
\begin{enumerate}
\item[(i)] If the map $K_{n+1}(A)\to K_{n+1}(A/I)$ is surjective, 
then (1)$_n$ $\implies$ (2)$_n$.
\item[(ii)] (1)$_n$\&(4)$_n$ $\implies$ (3)$_n$ $\implies$ (1)$_n$.
\end{enumerate}
If we assume moreover that the resolution square satisfies the pro Mayer--Vietoris 
property, then the following implications also hold:
\begin{enumerate}
\item[(iii)] If the map $K_{n+1}(X)\to K_{n+1}(Y_{\rm red})$ is surjective, then 
(2)$_n$ $\implies$ (1)$_n$.
\item[(iv)] (3)$_n$ $\implies$ (4)$_n$.
\end{enumerate}
\end{lem}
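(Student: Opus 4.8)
The plan is to reduce every implication to a diagram chase in the long exact sequences relating absolute and relative $K$-theory, supplemented by the two pro Mayer--Vietoris sequences (\ref{eqn:pro-ex-1}) and (\ref{eqn:pro-ex-2}) when those are available. First I would fix notation: write $U:=\Spec(A)\setminus V(I)$ and observe that $f$ restricts to an isomorphism $X\setminus Y\isoto U$, so that $K_n(X\setminus Y)=K_n(U)$ and the open piece appearing in (3)$_n$ and (4)$_n$ is literally the same group. I record the three forgetful fibre sequences $K_n(A,I)\to K_n(A)\to K_n(A/I)$, $K_n(A/I^r,I/I^r)\to K_n(A/I^r)\to K_n(A/I)$ and $K_n(X,Y_{\red})\to K_n(X)\to K_n(Y_{\red})$, which assemble into a commutative square comparing the map of (2)$_n$ with the map of (1)$_n$ via the vertical ``forget the relative structure'' maps. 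When the square satisfies the pro Mayer--Vietoris property I also use that, by the injectivity remark of Section~\ref{sec:Note}, assertion (1)$_n$ is equivalent to the vanishing of the boundary map $\partial_A\colon\{K_{n+1}(rY)\}_r\to K_n(A)$ in (\ref{eqn:pro-ex-1}), and (2)$_n$ to the vanishing of the analogous boundary $\partial_B\colon\{K_{n+1}(rY,Y_{\red})\}_r\to K_n(A,I)$ in (\ref{eqn:pro-ex-2}).

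For (i), the surjectivity of $K_{n+1}(A)\to K_{n+1}(A/I)$ forces the forgetful map $v\colon K_n(A,I)\to K_n(A)$ to be injective; then, since the composite of the top map of the comparison square with the right vertical equals the composite of $v$ with the bottom map of (1)$_n$, injectivity of that bottom map makes the top map of (2)$_n$ injective. For the two implications of (ii) the pro Mayer--Vietoris property is \emph{not} needed. The implication (3)$_n$ $\Rightarrow$ (1)$_n$ is immediate, since the map of (3)$_n$ factors as the map of (1)$_n$ followed by the identity on $K_n(A/I^r)$ and the restriction $K_n(X)\to K_n(U)$, so injectivity of the composite forces injectivity of the first map. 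For (1)$_n$ \& (4)$_n$ $\Rightarrow$ (3)$_n$ I would take $x\in K_n(A)$ dying in $K_n(A/I^r)\oplus K_n(U)$ for $r\gg0$, let $\xi$ be its image in $K_n(X)$, and use the Cartesian square defining $rY=X\times_A A/I^r$ to identify the two ways of mapping $x$ into $K_n(rY)$; this shows $\xi$ dies in $K_n(rY)\oplus K_n(U)$, whence $\xi=0$ by (4)$_n$ and then $x=0$ by (1)$_n$.

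For (iv) I would argue directly with the pro Mayer--Vietoris sequence (\ref{eqn:pro-ex-1}). Given $\xi\in K_n(X)$ dying in $\{K_n(rY)\}_r\oplus K_n(U)$, the pair $(0,\xi)$ lies in the kernel of $\{K_n(A/I^r)\}_r\oplus K_n(X)\to\{K_n(rY)\}_r$, hence by exactness lifts to some $x\in K_n(A)$ with $x\mapsto 0$ in $\{K_n(A/I^r)\}_r$ and $x\mapsto\xi$ in $K_n(X)$; restricting to $U$ gives $x\mapsto\xi|_U=0$. Now (3)$_n$ applies to $x$ and yields $x=0$, so $\xi=0$, which is (4)$_n$.

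I expect (iii) to be the main obstacle, since it is the one place where the hypothesis $K_{n+1}(X)\onto K_{n+1}(Y_{\red})$ must be used essentially. Here I would work with the boundary maps. Naturality of the Mayer--Vietoris boundary along the forgetful map of squares gives $\partial_A\circ u=v\circ\partial_B$, where $u\colon\{K_{n+1}(rY,Y_{\red})\}_r\to\{K_{n+1}(rY)\}_r$ is the forgetful map; since (2)$_n$ means $\partial_B=0$, the map $\partial_A$ kills $\im u=\ker(\{K_{n+1}(rY)\}_r\to K_{n+1}(Y_{\red}))$. The surjectivity hypothesis, together with the factorization $K_{n+1}(X)\to\{K_{n+1}(rY)\}_r\to K_{n+1}(Y_{\red})$, shows that the reduction map is a pro epimorphism and that $\{K_{n+1}(rY)\}_r$ is the sum of $\ker(\text{reduction})$ and the image of $K_{n+1}(X)$. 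Finally, exactness of (\ref{eqn:pro-ex-1}) in degree $n+1$ shows that $\partial_A$ also kills the image of $K_{n+1}(X)$, so $\partial_A=0$, i.e.\ (1)$_n$ holds. The delicate points to get right are the compatibility of the various boundary maps (equivalently, the indeterminacy in lifting an element of $K_n(A)$ to $K_n(A,I)$) and checking that the decomposition argument is valid in the abelian category of pro abelian groups rather than merely levelwise.
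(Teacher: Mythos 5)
Your arguments for (i) and (ii) are correct and are exactly the elementary observations in the paper, and your argument for (iii) is also correct: it is the paper's proof in the equivalent formulation ``the boundary of (\ref{eqn:pro-ex-1}) vanishes'' rather than ``the map $\{K_{n+1}(A/I^r)\}_r\oplus K_{n+1}(X)\to\{K_{n+1}(rY)\}_r$ is surjective''. Moreover, the point you flag as delicate in (iii) is harmless: the decomposition $K_{n+1}(rY)=\ker\bigl(K_{n+1}(rY)\to K_{n+1}(Y_{\red})\bigr)+\Im\bigl(K_{n+1}(X)\to K_{n+1}(rY)\bigr)$ holds at every level compatibly with the transition maps, hence is an equality of sub-pro-objects, and a morphism of pro abelian groups that vanishes on two subobjects whose sum is the whole object is zero.

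The genuine gap is in (iv), in the elementwise handling of pro abelian groups. Your hypothesis ``$\xi\in K_n(X)$ dying in $\{K_n(rY)\}_r\oplus K_n(U)$'' means, on elements, $\xi\in\bigcap_{r\ge1}\ker(K_n(X)\to K_n(rY))\cap\ker(K_n(X)\to K_n(U))$, so your conclusion is only that this intersection vanishes. That is strictly weaker than (4)$_n$, which requires a \emph{single} $r_0$ with $\ker(K_n(X)\to K_n(r_0Y))\cap\ker(K_n(X)\to K_n(U))=0$: for a nested family of subgroups, trivial intersection does not force any term to vanish. (Compare $\Z\to\{\Z/p^r\Z\}_r$: injective on elements, yet injective at no finite level, hence not a monomorphism of pro abelian groups --- exactly the distinction recorded in Section \ref{sec:Note}.) The same example shows your lifting step fails: exactness of (\ref{eqn:pro-ex-1}) is an equality of image and kernel \emph{subobjects} in the abelian category of pro abelian groups, and an ``element'' of the kernel (a morphism from the constant object $\Z$) need not lift through the resulting epimorphism, just as a compatible system in $\{\Z/p^r\Z\}_r$, i.e.\ a $p$-adic integer, need not come from $\Z$.

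What rescues the implication --- and this is the paper's proof --- is a uniform statement extracted from (\ref{eqn:pro-ex-1}): the map of pro abelian groups $\{\ker(K_n(A)\to K_n(A/I^r))\}_r\to\{\ker(K_n(X)\to K_n(rY))\}_r$ is surjective. Unwinding what an epimorphism of pro abelian groups means, this says that for every $s$ there exists $s'\ge s$ such that \emph{every} class in $K_n(X)$ killed by $K_n(X)\to K_n(s'Y)$ is the image of a class in $K_n(A)$ killed by $K_n(A)\to K_n(A/I^s)$. Taking $s=r_0$ to be a level at which (3)$_n$ holds, your finite-level chase (the lift restricts to zero on $U=X\setminus Y$, hence is zero by (3)$_n$, hence $\xi=0$) then proves (4)$_n$ at level $s'$. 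The essential missing ingredient in your write-up is this uniformity; elementwise exactness cannot supply it.
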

\begin{proof}
The claims (i) and (ii) are completely elementary, noting in (i) that the assumption implies $K_n(A,I)\subseteq K_n(A)$ and in (ii) that $X\setminus Y=\Spec A\setminus V(I)$. Now assume that the resolution square has the pro Mayer--Vietoris property.

(iii):  By~\eqref{eqn:pro-ex-2} and (2)$_n$, the map 
\[
\{K_{n+1}(A/I^r,I/I^r)\}_r\oplus K_{n+1}(X,Y_\sub{red})\to 
\{K_{n+1}(rY,Y_\sub{red})\}_r
\]
is surjective. But the assumed surjectivity of 
$K_{n+1}(X)\to K_{n+1}(Y_\sub{red})$ implies that the maps 
$K_{n+1}(X)\to K_{n+1}(rY)$ and $K_{n+1}(rY,Y_\sub{red})\to K_{n+1}(rY)$ are 
jointly surjective for any $r\ge1$, and so it follows that 
$\{K_{n+1}(A/I^r)\}_r\oplus K_{n+1}(X)\to \{K_{n+1}(rY)\}_r$ is also surjective. 
Now~\eqref{eqn:pro-ex-1} completes the proof of (1)$_n$.

(iv): It follows from~\eqref{eqn:pro-ex-1} that the map 
\[
\{\ker(K_n(A)\to K_n(A/I^r))\}_r\To\{\ker(K_n(X)\to K_n(rY))\}_r
\] 
is surjective, after which the implication is an elementary consequence of 
the identification $X\setminus Y=\Spec A\setminus V(I)$.
\end{proof}

\begin{exm}[$n=0,1$]\label{example01}
We claim that (1)$_1$, (2)$_1$, and (3)$_1$ are true. Indeed, the kernel 
$\Lambda$ of the restriction map 
\[
A=\Gamma(\Spec A,\roi_{\Spec A})\To \Gamma(\Spec A\setminus V(I),\roi_{\Spec A})
\] is supported on $V(I)$, hence annihilated by a power of $I$. It then 
follows from the Artin--Rees lemma that $\Lambda\cap I^r=0$ for $r\gg0$, i.e., 
that the map \[A\to A/I^r\oplus\Gamma(\Spec A\setminus V(I),\roi_{\Spec A})\] is 
injective. Taking units and using the split determinant map $K_1\to\bb G_m$, 
which is an isomorphism for any local ring, proves that assertion (3)$_1$ is 
true. Hence assertion (1)$_1$ is true by Lemma \ref{lemma_equivalences}.
Since $A$ is local, the map $K_2(A) \to K_2(A/I)$ is surjective
and hence (2)$_1$ follows again from Lemma \ref{lemma_equivalences}.

We claim also that (1)$_0$, (2)$_0$, and (3)$_0$ are true; these are easy 
consequences of $K_0(A)=K_0(A/I^r)=\bb Z$.

Assuming that the resolution square satisfies the pro Mayer--Vietoris property 
for $K$-theory, Lemma \ref{lemma_equivalences}(iv) implies that assertions (4)$_1$  and 
(4)$_0$ are also true.
\end{exm}

\begin{exm}[$n<0$]
Assume $n<0$. Then (4)$_n$ becomes vacuously true since $K_n(X)=0$. 
Also, (1)$_n$ and (3)$_n$ become the identical statement that 
$K_n(A)\to K_n(A/I)$ is injective, since negative $K$-groups of rings are 
nil-invariant. Similarly, (2)$_n$ becomes the assertion that 
$K_n(A,I)\to K_n(X,Y_\sub{red})\cong K_{n+1}(Y_\sub{red})$ is injective.
\end{exm}

Based on the known results when $A$ is one-dimensional, which we will review in the following subsection, it remains plausible to conjecture that (2)$_n$ and (4)$_n$ 
might be true in general. Our goal in this note is to prove
two cases of (2)$_n$ and to prove (1)$_n$ -- (4)$_n$ for cone 
singularities. 

\subsection{The one-dimensional case}\label{sec:dim-1}
We now collect together the known results in the case 
that $A$ is one-dimensional and its desingularization is obtained by 
normalization.

Let $(A,\frak m)$ be a one-dimensional, Noetherian local ring such that 
$\Spec A\setminus\{\frak m\}$ is regular, i.e., $A_\frak p$ is a field for 
each minimal prime ideal $\frak p\subset A$. Let $\tilde{A_\sub{red}}$
denote the normalization of $A_{\rm red}$. Assume that the 
normalization map $A_\sub{red}\to B:=\tilde{A_\sub{red}}$ is finite. Then $B$ 
is a regular, one-dimensional, semi-local ring, and
\[
\xymatrix{
\Spec B/\frak mB\ar[r]\ar[d] & \Spec B\ar[d]\\
\Spec A/\frak m\ar[r] & \Spec A}
\]
is a resolution square which satisfies the pro Mayer--Vietoris property.
Assuming that $B$ contains a field, we
make the following assertions about 
this resolution square:
\begin{itemize}\itemsep0pt
\item (1)$_2$ is not necessarily true;
\item (2)$_n$ is true for all $n\in\bb Z$;
\item (3)$_2$ is not necessarily true;
\item (4)$_n$ is true for all $n\in\bb Z$;
\end{itemize}
Firstly, (4)$_n$ is true since the validity of Gersten's conjecture in the 
geometric case implies that $K_n(B)\to K_n(\Spec B\setminus V(\frak mB))$ is 
injective.

Secondly, it now follows from Lemma \ref{lemma_equivalences}(ii) that (1)$_2$ 
and (3)$_2$ are equivalent; in the next example we will provide a particular 
choice of $A$ for which (3)$_2$ fails.

We now show that (2)$_n$ is true for all $n\in\bb Z$; in fact, 
this was proved in \cite[Thm.~2.7]{Morrow_Birelative_dim1}, inspired by the 
case $n=2$ \cite[Thm.~2.9]{Krishna2005}, under the extraneous assumption that 
$A$ was reduced. Indeed, it was shown 
\cite[Corol.~2.6]{Morrow_Birelative_dim1} that the canonical map 
$K_n(B,\frak M)\to \{K_n(B/\frak M^r,\frak M/\frak M^r)\}_r$ is surjective for 
all $n\ge0$, where $\frak M=\sqrt{\frak mB}$ is the Jacobson radical of 
$B$; it is also surjective if $n<0$, since the codomain vanishes by nil-invariance of negative $K$-theory for rings. Hence~\eqref{eqn:pro-ex-1} breaks into short exact sequences, proving 
(2)$_n$.

\begin{exm}\label{exm:failure}
Let $k$ be a field of char $\neq2$ and $A=k[X,Y]_{(X,Y)}/(Y^2-X^2(X+1))$ 
the local ring at the singular point of the nodal curve $Y^2=X^2(X+1)$. Then 
$A$ is a domain, with finite normalization map since it is essentially of finite type over a field. 
However, assertion (3)$_2$ fails: the map 
$K_2(A)\to K_2(A/\fm^r) \oplus K_2({\rm Frac} A)$ is not injective for any 
$r\ge1$. The proof may be found in \cite[Prop.~2.12]{Morrow_Singular_Gersten} 
and
relies on the fact that 
$K_3(\tilde A)\to K_3(\tilde A/\fm\tilde A)=K_3(k)\oplus K_3(k)$ can be 
shown not to be surjective.
\end{exm}

\section{Vanishing of some relative Hochschild and
cyclic homology}\label{section_preliminary}
This section, where we
establish some vanishing results for relative Hochschild and cyclic
homology groups, is at the heart of the proofs of our main results.
The following preliminary lemma may be ignored by readers who are only 
interested in the case of finite type algebras 
(``smooth'' means ``localisation of smooth, finite type''):

\begin{lem}\label{lemma_relative_NP}
Let 
$f:A\to B$ be a surjection of 
regular, local $\bb Q$-algebras. Then it is 
possible to write $A=\indlim_iA_i$ and $B=\indlim_i B_i$ as filtered inductive 
limits of smooth, local $\bb Q$-algebras, in such a way that $f=\indlim_if_i$ 
where $f_i:A_i\to B_i$ are compatible surjections.
\end{lem}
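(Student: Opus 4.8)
The plan is to combine Popescu's general Néron desingularization theorem with the elementary structure theory of surjections of regular local rings. The point is that, over the perfect field $\Q$, "regular" and "smooth" coincide, so the only real work is to produce presentations of $A$ and $B$ as filtered colimits of smooth local $\Q$-algebras that are simultaneously compatible with $f$ and keep the quotients smooth at every finite stage.

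First I would analyse the kernel $\mathfrak a:=\ker f$. Since $A$ and $B=A/\mathfrak a$ are both regular local $\Q$-algebras, writing $c=\dim A-\dim B$, the quotient $\fm_{A/\mathfrak a}/\fm_{A/\mathfrak a}^2$ is identified with $\fm_A/\fm_A^2$ modulo the image of $\mathfrak a$, which therefore has dimension $\dim B$; hence $\mathfrak a$ surjects onto a $c$-dimensional subspace of $\fm_A/\fm_A^2$. Lifting a basis of this subspace to elements $x_1,\dots,x_c\in\mathfrak a$, the ring $A/(x_1,\dots,x_c)$ is regular of dimension $\dim B$, and since it is a further quotient of this domain of the same dimension, the inclusion $(x_1,\dots,x_c)\subseteq\mathfrak a$ is an equality. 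Thus $\ker f=(x_1,\dots,x_c)$ with the $x_j$ forming part of a regular system of parameters of $A$.

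Next I would invoke Popescu's theorem: the regular $\Q$-algebra $A$ is a filtered colimit $A=\indlim_\lambda C_\lambda$ of smooth finite-type $\Q$-algebras. Localising each $C_\lambda$ at the preimage $\mathfrak p_\lambda$ of $\fm_A$ produces smooth local $\Q$-algebras $(C_\lambda)_{\mathfrak p_\lambda}$ with \emph{local} transition maps and $A=\indlim_\lambda (C_\lambda)_{\mathfrak p_\lambda}$. Passing to the cofinal subsystem of indices $i\ge i_0$ for which the $x_j$ are already defined, I obtain a presentation $A=\indlim_i A_i$ by smooth local $\Q$-algebras together with compatible lifts $x_j^{(i)}\in A_i$. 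I then set $B_i:=A_i/(x_1^{(i)},\dots,x_c^{(i)})$ with $f_i\colon A_i\to B_i$ the quotient maps; these are tautologically surjective and compatible, and satisfy $\indlim_i f_i=f$ and $\indlim_i B_i=A/\mathfrak a=B$.

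The crux, and the step I expect to be the main obstacle, is showing that each $B_i$ is again smooth local, equivalently regular local, equivalently that $x_1^{(i)},\dots,x_c^{(i)}$ form part of a regular system of parameters of $A_i$. Because the transition maps are local, one has $\fm_A/\fm_A^2=\indlim_i \fm_{A_i}/\fm_{A_i}^2$ as a colimit of residue-field vector spaces, and I would prove the descent statement that finitely many elements of $A_{i_0}$ whose images are linearly independent in the colimit are already linearly independent in each $\fm_{A_i}/\fm_{A_i}^2$ for $i\ge i_0$. The mechanism is that under the semilinear transition maps the dimension of the span of the $x_j^{(i)}$ can only decrease with $i$, while any linear dependence among the images in the colimit must originate at some finite stage; hence linear independence in the colimit forces the rank to be exactly $c$ at every stage. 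Applied to the images of $x_1,\dots,x_c$ produced in the first step, this shows that each $B_i=A_i/(x_1^{(i)},\dots,x_c^{(i)})$ is regular local and essentially of finite type over $\Q$, and therefore (the regular locus being open and regular $=$ smooth over the perfect field $\Q$) a localisation of a smooth finite-type $\Q$-algebra. Once Popescu's theorem is granted, everything else is formal, so this preservation of regularity of the finite-stage quotients is where the real content lies.
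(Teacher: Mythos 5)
Your proposal is correct and follows essentially the same route as the paper's proof: both identify $\ker f$ as generated by elements $x_1,\dots,x_c$ forming part of a regular system of parameters of $A$, apply N\'eron--Popescu desingularization to $A$ (localized so that the transition maps are local), lift these elements to a finite stage $A_{i_0}$, define $B_i$ as the quotients by the lifted elements, and verify smoothness of each $B_i$ by showing the lifts stay linearly independent in $\fm_{A_i}/\fm_{A_i}^2$ --- which in both arguments comes down to the fact that a dependence at a finite stage would push forward, along the semilinear map to $\fm_A/\fm_A^2$ over the injective residue field embedding, to a dependence contradicting the choice of the $x_j$. The only difference is expository: you spell out steps the paper treats as standard, namely the structure theory of the kernel and the identification of regular local, essentially finite type $\Q$-algebras with smooth local ones.
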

\begin{proof}
Choose a regular system of parameters $t_1,\dots,t_d$ of $A$ such that 
$I:=\ker f$ is generated by $t_1,\dots,t_c$ for some $0\le c\le d=\dim A$.
According to N\'eron--Popescu desingularization, $A$ may be written as a 
filtered inductive limit $\indlim_{i\in I}A_i$ of smooth, local $\bb Q$-algebras 
such that the homomorphisms $\phi_i:A_i\to A$ are local. Let 
$\fm_i=\phi^{-1}_i(\fm_A)$ denote the maximal ideal of $A_i$. Possibly 
after discarding part of the bottom of this inductive system, we may assume 
that $I$ has a minimal element $i_0$ and that the elements $t_1,\dots,t_c$ may 
be lifted to $\tilde t_1,\dots,\tilde t_c\in \fm_{i_0}$.
Set $B_i = {A_i}/{(\tilde t_1,\dots,\tilde t_c)}$.

The choice of $t_1,\dots,t_c \in A$ implies that 
the images of $\tilde t_1,\dots,\tilde t_c$ in 
${\fm_i}/{\fm_i^2}$ are linearly independent over $A_i/\fm_i$.
In particular, their images in $A_i$ form part of a regular system of 
parameters. This proves the lemma.
\end{proof}

\begin{remk} It was remarked by the referee that \lemref{lemma_relative_NP}
holds for surjective maps $A \to B$ of regular algebras containing any field.
But we do not use this generalization here.
\end{remk}

\begin{prop}\label{proposition_smooth_vanishing}
Let $Y\into X$ be a closed embedding of regular, Noetherian $\Q$-schemes of 
finite Krull dimension. Then the pro abelian group 
$\{HC_n^i(rY,Y)\}_r$ vanishes for $0\le i<n$.
\end{prop}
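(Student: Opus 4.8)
The plan is to reduce the assertion, which is local and functorial, to an explicit computation for a smooth affine scheme, and then to recognise the relevant weight‑$i$ cyclic homology (in weights $i<n$) as de Rham cohomology, which is insensitive to the infinitesimal thickenings $rY$. First I would reduce to the affine case. Since the Hochschild and cyclic homology of a scheme, together with their Adams summands, are the hypercohomology of complexes of Zariski sheaves whose stalks are the corresponding groups of the local rings, the hypothesis of finite Krull dimension makes the associated local‑to‑global spectral sequence bounded; hence pointwise pro‑vanishing forces global pro‑vanishing, and we may assume $X=\Spec A$, $Y=\Spec A/I$ with $A$ and $A/I$ regular local $\Q$‑algebras and $f\colon A\to A/I$ surjective, so that $rY=\Spec A/I^r$. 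Next I would apply \lemref{lemma_relative_NP} to write $f=\indlim_i f_i$ as a filtered colimit of surjections $f_i\colon A_i\to B_i=A_i/(\tilde t_1,\dots,\tilde t_c)$ of smooth local $\Q$‑algebras, with $\tilde t_1,\dots,\tilde t_c$ part of a regular system of parameters. As Hochschild and cyclic homology commute with filtered colimits of rings and $A/I^r=\indlim_i A_i/(\tilde t_1,\dots,\tilde t_c)^r$, this reduces us to the case where $A$ is a smooth local $\Q$‑algebra and $I$ is generated by part of a regular system of parameters. (The passage to the colimit must be handled with some care, since a filtered colimit of pro‑zero systems need not be pro‑zero; but the vanishing obtained below comes from a natural isomorphism of de Rham complexes compatible with the colimit, so the reduction is legitimate.)

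In this smooth setting I would exploit the Hodge (lambda) decomposition. By the classical Hochschild--Kostant--Rosenberg theorem the weight‑$i$ Hochschild homology of a smooth $\Q$‑algebra is concentrated in degree $i$, where it is the module of $i$‑forms; the crucial input is the \emph{pro} HKR theorem, which shows that pro‑systematically the singular thickenings $rY$ behave like smooth schemes for Hochschild homology, namely $\{HH_n^i(rY)\}_r$ is pro‑zero for $n\neq i$ and is pro‑isomorphic to $\{\Omega^i_{A/I^r}\}_r$ for $n=i$. Consequently the relative groups $\{HH_n^i(rY,Y)\}_r$ are pro‑zero for all $n\neq i$, and for $n=i$ they are the relative forms $\{\ker(\Omega^i_{A/I^r}\to\Omega^i_{A/I})\}_r$. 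Feeding this into the Connes periodicity (SBI) sequence in each fixed weight,
\[
\cdots\To HH_n^i(rY,Y)\xrightarrow{\ I\ } HC_n^i(rY,Y)\xrightarrow{\ S\ } HC_{n-2}^{i-1}(rY,Y)\xrightarrow{\ B\ } HH_{n-1}^i(rY,Y)\To\cdots ,
\]
and iterating $S$, the relative weight‑$i$ Hochschild terms contribute only on the diagonal $n=i$. For $i<n$ this bookkeeping identifies $\{HC_n^i(rY,Y)\}_r$ either with $HC_{n-2i}^0(rY,Y)$ in positive degree $n-2i\ge1$, which is zero because weight‑$0$ cyclic homology is concentrated in degree $0$ (when $n>2i$), or with the relative de Rham cohomology $\{H^{2i-n}_{\mathrm{dR}}(rY,Y)\}_r$ (when $n\le 2i$), exactly reproducing the classical formula $HC_n^i=H^{2i-n}_{\mathrm{dR}}$ valid in all weights below the top.

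It therefore remains to prove that relative de Rham cohomology is pro‑zero, i.e.\ that the restriction $\{H^{*}_{\mathrm{dR}}(rY)\}_r\to H^{*}_{\mathrm{dR}}(Y)$ is an isomorphism of pro abelian groups; this is the heart of the matter and the step I expect to be the main obstacle. Conceptually it is also the reason the hypothesis $i<n$ is indispensable: in the top weight $i=n$ one has $HC_n^n=\Omega^n/d\Omega^{n-1}$, which genuinely detects the nilpotents of $rY$, whereas the lower weights are de Rham cohomology, a nil‑invariant. To establish the required pro‑invariance I would argue directly with the regular sequence $t_1,\dots,t_c$: the de Rham complex $\Omega^\bullet_{A/I^r}$ receives $\Omega^\bullet_{A/I}$, and an explicit Koszul/Poincar\'e‑lemma computation along the $t_j$, combined with the Artin--Rees lemma to control the $I$‑adic error terms, should show that the induced maps on cohomology become isomorphisms in the pro‑category. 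Equivalently, one may invoke the pro‑HKR theorem for cyclic homology directly, which computes $\{HC_n(rY)\}_r$ in the relevant weights in terms of the de Rham cohomology of $Y$, thereby yielding the pro‑vanishing of the relative groups for $0\le i<n$.
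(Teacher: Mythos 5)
Your overall route coincides with the paper's own proof: Zariski descent to the affine local case, \lemref{lemma_relative_NP} (N\'eron--Popescu) to reduce to smooth local $\Q$-algebras with $I$ generated by part of a regular system of parameters, and then HKR plus the pro HKR theorem to translate the weight-$i$ relative cyclic homology, for $i<n$, into relative de Rham cohomology of the thickenings (the paper does this via the map of mixed complexes rather than your weight-by-weight SBI bookkeeping, but that difference is cosmetic). However, the step you yourself call ``the heart of the matter'' --- the vanishing of that relative de Rham cohomology --- is exactly where the proposal stops being a proof, and neither of the two sketches you offer for it works as stated. The appeal to ``the pro-HKR theorem for cyclic homology'' is circular: pro HKR expresses $\{HC_n^{(i)}(rY)\}_r$ in terms of the de Rham complexes of the thickenings $rY$ themselves, not of $Y$; identifying the de Rham cohomology of $rY$ with that of $Y$ is precisely what remains to be proven. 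And the ``Koszul/Poincar\'e-lemma computation combined with Artin--Rees, giving isomorphisms in the pro-category'' is not only vague but in tension with your own colimit reduction: as you note, a filtered colimit of pro-zero systems need not be pro-zero, so a de Rham statement that is only pro in $r$ would not survive the N\'eron--Popescu limit.

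The missing idea, which is how the paper closes the argument, is that the de Rham vanishing holds for each \emph{fixed} $r$, with no pro-structure and no Artin--Rees: after the reduction, $S:=R/I$ is formally smooth over a field $K$ with $R$ essentially of finite type over $K$, so the $I$-adic completion of $R$ is isomorphic to $S[[T_1,\dots,T_c]]$ (Hartshorne, Lem.~II.1.2), whence $R/I^r\cong S[T_1,\dots,T_c]/(T_1,\dots,T_c)^r$ is a positively graded $K$-algebra with degree-zero component $S$, and the Poincar\'e lemma (Weibel, Corol.~9.9.3) gives a quasi-isomorphism $\Omega_{R/I^r}^\bullet\quis\Omega_S^\bullet$ for every $r\ge1$. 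This graded splitting $S\to R/I^r$ is also what would justify your unexamined assertion that ``$\Omega^\bullet_{A/I^r}$ receives $\Omega^\bullet_{A/I}$'': for a general surjection no such ring map exists, so the comparison map you want to invert is not even defined without this input. Finally, because the resulting quasi-isomorphism is termwise in $r$ and natural, it commutes with the filtered colimit, which is what makes the N\'eron--Popescu reduction legitimate --- resolving the very concern you raised. Without the chain (formal smoothness $\Rightarrow$ completion is a power series ring $\Rightarrow$ graded structure $\Rightarrow$ Poincar\'e lemma), the proposal has a genuine gap at its decisive step.
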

\begin{proof}
By the Zariski descent of cyclic homology 
(see \cite[Thm~2.9]{CHSW} for schemes essentially of finite type over
a field and \cite{Weibel-1} for general Noetherian $\Q$-schemes),
we may assume that $X=\Spec R$ is affine, with $Y$ defined by an ideal 
$I\subseteq R$; note that $R$ and $R/I$ are regular.

The usual map of mixed complexes 
$(C_\blob^{\Q}(-),b,B)\to (\Omega_{-}^\blob,0,d)$ is an isomorphism on the 
associated Hochschild homologies both for $R$ and $R/I$, 
by the usual HKR theorem 
\cite[Thm.~3.4.4]{Loday1992}, and for the pro ring $R/I^\infty$ by the 
pro HKR theorem \cite[Thm.~3.23]{Morrow_pro_H_unitality}.
There are therefore induced isomorphisms of the associated cyclic homologies 
and of the relative homologies; in particular, 
\[
\{HC_n^{(i)}(R/I^r,I/I^r)\}_r\isoto 
\{H^{2n-i}(\ker(\Omega_{R/I^r}^\blob\to\Omega_{R/I}^\blob))\}_r
\] for $0\le i<n$.

Hence, to complete the proof, we may show that the canonical projection 
$\Omega_{R/I^r}^\blob\to\Omega_{R/I}^\blob$ is a quasi-isomorphism for each 
$r\ge1$. If $I$ is generated by a regular sequence and $R$ is essentially of 
finite type over a subfield $K\subseteq R$, whence $S:=R/I$ is formally smooth 
over $K$, then it is well-known (e.g., \cite[Lem.~II.1.2]{Hartshorne1975}) 
that the $I$-adic completion of $R$ is isomorphic to $S[[T_1,\dots,T_c]]$. In 
particular, $R/I^r\cong S[T_1,\dots,T_c]/(T_1,\dots,T_c)^r$ admits the 
structure of a positively graded $K$-algebra with degree zero component $S$, 
and so it follows from the Poincar\'e Lemma \cite[Corol.~9.9.3]{Weibel1994} 
that $\Omega_{R/I^r}^\blob\quis\Omega_{S}^\blob$ (even though the base field for these K\"ahler differentials is $\bb Q$, not $K$). In general, we can easily 
reduce to this case, by using Zariski descent to assume that $R$ is local and by then applying \lemref{lemma_relative_NP}.
\end{proof}

The following two results are a modification of the previous proposition  
when $Y$ is replaced by a normal crossing divisor. 
A {\em strict normal crossing divisor} on a regular affine scheme $\Spec(R)$ 
is a closed subscheme defined by a non-zero-divisor of the form 
$t_1\cdots t_c$, where $t_1,\dots,t_c\in R$ have the property that for each 
prime ideal $\fp\in V(t_1\cdots t_c)$, those of the $t_i$ which belong to 
$\fp R_{\fp}$ form part of a regular system of parameters of 
$R_{\fp}$. A {\em normal crossing divisor} on a regular scheme is a divisor 
which \'etale locally is a strict normal crossing divisor.

\begin{lem}\label{lem:sncd}
Let $R$ be a regular, local $\Q$-algebra and let $f\in R$ be a 
non-zero-divisor for 
which the de Rham differential $d:fR/f^2R\to\Omega_{R}^1\otimes_RR/fR$ is 
injective. Then:
\begin{enumerate}
\item $HH_n^{(i)}(R/fR)=0$ if $i\le n/2$ (unless $i=n=0$).
\item $\{HH_n^{(i)}(R/f^rR,fR/f^rR)\}_r=0$ if $i\le (n+1)/2$ 
(unless $i=n=1$ or ${i=n=0}$).
\item $\{HC_n^{(1)}(R/f^rR,fR/f^rR)\}_r=0$ if $n\ge 2$.
\end{enumerate}
\end{lem}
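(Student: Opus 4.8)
The plan is to run everything through the Hodge decomposition and the cotangent complex, in the same spirit as \propref{proposition_smooth_vanishing}. Since $R$ is a regular local $\Q$-algebra, N\'eron--Popescu desingularisation identifies $\mathbb{L}_{R/\Q}$ with $\Omega^1_R$, a flat module in homological degree $0$; as $f$ is a non-zero-divisor, the transitivity triangle for $\Q\to R\to R/f^rR$ then presents $\mathbb{L}_{(R/f^rR)/\Q}$ by the two-term complex of flat $R/f^rR$-modules
\[
\mathbb{L}_r:\quad\big[\,R/f^rR\xrightarrow{\ \cdot\,rf^{r-1}df\ }\Omega^1_R\otimes_R R/f^rR\,\big]
\]
in degrees $1$ and $0$, the degree-$1$ term being the conormal $f^rR/f^{2r}R$. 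For $r=1$ the differential is $\cdot\,df$, and the hypothesis says exactly that it is injective, i.e. that $\mathbb{L}_{(R/fR)/\Q}$ lives in degree $0$. I would then invoke Quillen's identification $HH^{(i)}_n(-)\cong H_{n-i}(L\Lambda^i\mathbb{L}_{(-)/\Q})$ together with Illusie's d\'ecalage description of the derived exterior power of a two-term flat complex: $L\Lambda^i\mathbb{L}_r$ is the truncated Koszul--de Rham complex
\[
F^{(i)}_r:\quad R/f^rR\xrightarrow{\,\wedge\,rf^{r-1}df\,}\Omega^1_R\otimes R/f^rR\to\cdots\to\Lambda^i\Omega^1_R\otimes R/f^rR,
\]
sitting in homological degrees $i,\dots,0$, with $\Lambda^{i-j}\Omega^1_R\otimes\Sym^j(f^rR/f^{2r}R)$ in degree $j$; thus $HH^{(i)}_n(R/f^rR)=H_{n-i}(F^{(i)}_r)$.

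Part (i) is then immediate: $F^{(i)}_1$ is concentrated in homological degrees $[0,i]$, so $HH^{(i)}_n(R/fR)=H_{n-i}(F^{(i)}_1)$ vanishes whenever $n-i>i$, i.e. $i<n/2$. At the boundary $i=n/2$ the relevant group is the top homology $H_i(F^{(i)}_1)=\ker(\cdot\,df\colon R/fR\to\Omega^1_R\otimes R/fR)$, which vanishes by the injectivity hypothesis when $i\ge1$; the remaining case $i=n=0$ is the stated exception.

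For part (ii) I would analyse the fibre of $F^{(i)}_r\to F^{(i)}_1$, since $HH^{(i)}_n(R/f^rR,fR/f^rR)=H_{n-i}(\operatorname{fib}(F^{(i)}_r\to F^{(i)}_1))$. The key observation is that for $r\ge2$ the comparison map $\mathbb{L}_r\to\mathbb{L}_1$ is zero on conormals (as $f^r\in(f^2)$), so on the d\'ecalage pieces $F^{(i)}_r\to F^{(i)}_1$ is the coefficient reduction in homological degree $0$ and is zero in all positive degrees. The long exact sequence of the fibre then yields, for $m\ge1$, a short exact sequence
\[
0\to HH^{(i)}_{m+1+i}(R/fR)\to H_m\operatorname{fib}(F^{(i)}_r\to F^{(i)}_1)\to HH^{(i)}_{m+i}(R/f^rR)\to0 .
\]
Writing $m=n-i$, the hypothesis $i\le(n+1)/2$ reads $m\ge i-1$, and the left-hand group $H_{m+1}(F^{(i)}_1)$ then vanishes by part (i). For the right-hand group I would use that the transition $R/f^{r'}R\to R/f^rR$ acts on the degree-$j$ d\'ecalage piece of $F^{(i)}$ through multiplication by $f^{j(r'-r)}$ (because it is multiplication by $f^{r'-r}$ on each conormal factor); hence on $H_m(F^{(i)}_{r'})$ with $m\ge1$ it factors through $f^{m(r'-r)}$, which is zero in $R/f^rR$ once $m(r'-r)\ge r$. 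Thus $\{HH^{(i)}_{m+i}(R/f^rR)\}_r$ is pro-zero for every $m\ge1$, and the sequence gives $\{HH^{(i)}_n(R/f^rR,fR/f^rR)\}_r=0$ throughout the range; the only cases with $m=0$ inside the range are $i=n\in\{0,1\}$, the listed exceptions. I expect this to be the main obstacle: correctly pinning down the comparison map and the transition maps on the d\'ecalage pieces, and reading off the power of $f$ that forces pro-vanishing.

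Finally, for part (iii) I would pass from Hochschild to cyclic homology through Connes' weight-graded periodicity sequence. As $HC^{(0)}_\ast(R/f^rR,fR/f^rR)=fR/f^rR$ is concentrated in degree $0$, its weight-$1$ part gives $HC^{(1)}_n\cong HH^{(1)}_n$ for $n\ge3$ and, at each level $r$, an exact sequence of relative groups
\[
HH^{(1)}_2\xrightarrow{\,I\,}HC^{(1)}_2\xrightarrow{\,S\,}fR/f^rR\xrightarrow{\,B\,}HH^{(1)}_1 ,
\]
in which $B$ is the de Rham differential $d\colon fR/f^rR\to HH^{(1)}_1(R/f^rR,fR/f^rR)=\ker(\Omega^1_{R/f^rR}\to\Omega^1_{R/fR})$. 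By part (ii) (with $i=1$) the pro-groups $\{HH^{(1)}_n(R/f^rR,fR/f^rR)\}_r$ vanish for $n\ge2$; this settles $n\ge3$, and for $n=2$ it reduces the claim to the injectivity of $B$ (for then $HC^{(1)}_2$ is the image of the pro-zero group $\{HH^{(1)}_2\}_r$). To see that $B$ is injective I would argue by $f$-adic descent: lifting a class to $g\in(f)$, the relation $dg=0$ in $\Omega^1_{R/f^rR}$ means $dg\in Rf^{r-1}df+f^r\Omega^1_R\subseteq f^{r-1}\Omega^1_R$; writing $g=f^ku$ with $1\le k\le r-1$ forces $ku\,df+f\,du\in f^{r-k}\Omega^1_R$, hence $k\,\ov u\,\ov{df}=0$ in $\Omega^1_R\otimes R/fR$, and the injectivity hypothesis gives $\ov u=0$, i.e. $g\in(f^{k+1})$. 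Iterating yields $g\in f^rR$, so the class vanishes. Hence $\{HC^{(1)}_n(R/f^rR,fR/f^rR)\}_r=0$ for all $n\ge2$, as required.
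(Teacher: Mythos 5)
Your proposal is correct, and its skeleton coincides with the paper's: in (i) you compute $H_*(L\Lambda^i\bb L_{(R/fR)/\Q})$ from the two-term flat presentation of the cotangent complex --- your d\'ecalage complex $F^{(i)}_1$ is exactly the paper's Kassel--Sletsj{\o}e complex~\eqref{eqn:sncd-0} --- and kill the top homology $\ker(\cdot\,df)$ using the hypothesis; in (ii) you trap the relative group between $HH^{(i)}_{n+1}(R/fR)$ and $HH^{(i)}_n(R/f^rR)$; in (iii) you run the weight-one SBI sequence, dispose of $n\ge3$ by (ii), and for $n=2$ identify $HC^{(1)}_2$ with a subgroup of $\ker(d)$ on $fR/f^rR$, which you kill by $f$-adic induction from the hypothesis, just as the paper does. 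The one genuine departure is in (ii): the paper kills $\{HH^{(i)}_n(R/f^rR)\}_r$ for $i<n$ by citing Morrow's pro-HKR theorem, whereas you prove that pro-vanishing directly, observing that the transition map $F^{(i)}_{r'}\to F^{(i)}_r$ is multiplication by $f^{j(r'-r)}$ on the degree-$j$ d\'ecalage piece, hence chain-level zero in every positive degree once $r'\ge 2r$. This buys self-containedness, and it works precisely because $(f)$ is principal, so the conormal modules are invertible and the d\'ecalage pieces and transition maps are completely explicit; it would not substitute for the pro-HKR input in the non-principal setting of \propref{proposition_smooth_vanishing}. Two minor points to make explicit: your description of the comparison $F^{(i)}_r\to F^{(i)}_1$ (for $r\ge2$) as ``coefficient reduction in degree $0$, zero above'' is indeed a chain map representing the canonical comparison, because the relevant square commutes ($d(f^ru)\in f\Omega_{R}^1$); and in (iii) you establish levelwise injectivity of $B$, which is slightly stronger than the pro-vanishing the paper asserts --- both statements are true.
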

\begin{proof}
(i): Since $R$ is geometrically regular over $\Q$ (since it is regular) and $f$ is a 
non-zero-divisor, the cotangent complexes $\bb L_{R}$ and $\bb L_{R/fR|R}$ of 
$\Q\to R$ and $R\to R/fR$ are respectively equal to 
$\Omega_{R}^1$ and $(fR/f^2R)[1]$, whence it follows from the 
Jacobi--Zariski sequence that the cotangent complex $\bb L_{R/fR}$ of 
$\Q\to R/fR$ is quasi-isomorphic to the following chain complex of flat 
$R/fR$-modules: 
\[
0\leftarrow\Omega_{R}^1\otimes_RR/fR\leftarrow fR/f^2R\leftarrow0.
\]

More generally, for any $i\ge 1$, the exterior powers $\bb L_{R}^i$ and 
$\bb L_{R/fR|R}^i$ are respectively equal to $\Omega_{R}^i$ and 
$(f^iR/f^{i+1}R)[i]$, and it follows from the 
Kassel--Sletsj\o e spectral sequence \cite[Thm.~3.2]{Kassel1992} that 
$\bb L^i_{R/fR}$ is quasi-isomorphic to the following chain complex of flat 
$R/fR$-modules:
\begin{equation}\label{eqn:sncd-0}
0\leftarrow\Omega_{R}^i\otimes_RR/fR\leftarrow\Omega_{R}^{i-1}\otimes_RfR/f^2R
\leftarrow\cdots\leftarrow\Omega_{R}^1\otimes_Rf^{i-1}R/{f^iR}\leftarrow 
f^iR/f^{i+1}R\leftarrow 0
\end{equation}
(where $\Omega_{R}^{i-j}\otimes_Rf^jR/f^{j+1}R$ sits in degree $j$).

This presentation of the cotangent complex can also be derived from
\cite[Corol.~3.4]{CGG},
which holds more generally for complete intersection ideals in $R$ (see also
\cite[\S~5]{FT}).

Using~\eqref{eqn:sncd-0}, part of the Andr\'e--Quillen homology of 
$\bb Q\to R/fR$ can be written as
\[
D_n^i(R/fR) = \left\{
\begin{array}{ll}
\ker(d:f^iR/f^{i+1}R\to\Omega_{R}^1\otimes_Rf^{i-1}R/f^iR) &
\mbox{if $i =n$,} \\
0 & \mbox{if $i > n$}.
\end{array}
\right.
\]
It follows easily from the hypothesis on $f$ that this kernel is zero. 
Finally, recall that $D_n^i(R/fR)=HH_{n+i}^{(i)}(R/fR)$ to complete the proof.

(ii): There are short exact sequences of pro $R$-modules 
\[
HH_{n+1}^{(i)}(R/fR)\To \{HH_n^{(i)}(R/f^rR,fR/f^rR)\}_r\To 
\{HH_n^{(i)}(R/f^rR)\}_r,
\] 
where the right term vanishes for $i<n$ by the pro HKR theorem 
\cite[Thm.~3.23]{Morrow_pro_H_unitality}. Moreover, the left term vanishes if 
$i\le(n+1)/2$ by (i), and hence the central term also vanishes.

(iii): To save space, we will temporarily use the notation $HH_n^{(i)}$ for the pro abelian group 
$\{HH_n^{(i)}(R/f^rR,fR/f^rR)\}_r$ and similarly for cyclic homology. For any 
$n\ge 3$, we see from (ii) and the SBI sequence 
$HH_n^{(1)}\to HC_n^{(1)}\to HC_{n-2}^{(0)}= 0$ that $HC_n^{(1)}=0$. It remains to 
treat the difficult case when $n=2$. 

In the SBI sequence $HH_2^{(1)}\to HC_2^{(1)}\to HC_0^{(0)}\to HH_1^{(1)}$, it follows from (i) that the left term 
vanishes and the right term embeds into $\{HH_1^{(1)}(R/f^rR)\}_r=\{\Omega_{R/f^rR}^1\}_r$, where we have again applied the pro HKR theorem. 
Since $HC_0^{(0)}=\{fR/f^rR\}_r$ with the final arrow $HC_0^{(0)}\to HH_1^{(1)}$ 
corresponding to the de Rham differential, there is an induced isomorphism 
\[HC_2^{(1)}\isoto\{\ker(d:fR/f^rR\to\Omega_{R}^1\otimes_RR/f^{r-1}R)\}_r\] 
(here we have implicitly used the Leibniz rule to identify 
$\{\Omega_{R}^1\otimes_RR/f^{r-1}R\}_r$ and $\{\Omega_{R/f^rR}^1\}_r$). It 
follows easily from the hypothesis on $f$ that this kernel is zero, and so we 
deduce that $HC_2^{(1)}=0$.
\end{proof}

\begin{exm}\label{exm:sncd-1}
Suppose that $R$ is a regular $\Q$-algebra and that $t\in R$ is a 
non-zero-divisor for which $R/tR$ is also regular; then we claim that the 
hypothesis of the previous lemma is satisfied, i.e., that 
$d:tR/t^2R\to\Omega_{R}^1\otimes_RR/tR$ is injective.

By Lemma \ref{lemma_relative_NP} and the same reductions as in Proposition 
\ref{proposition_smooth_vanishing}, it is enough to prove that 
$d:TS[[T]]/T^2S[[T]]\to\Omega_{S[[T]]}^1\otimes_{S[[T]]}S$ is injective for any 
regular $\Q$-algebra $S$. But this is clear, since the canonical differential 
$d/dt$ induces a map of $S[[T]]$-modules $\Omega_{S[[T]]}^1\to S[[T]]$ such 
that the resulting composition 
\[TS[[T]]/T^2S[[T]]\to\Omega_{S[[T]]}^1\otimes_{S[[T]]}~S\to S[[T]]\otimes_{S[[T]]}S=S\] is given by $sT\mapsto s$.
\end{exm}

\begin{prop}\label{proposition_cyclic}
Let $X$ be a regular, Noetherian $\Q$-scheme of finite Krull dimension, and let 
$Y\into X$ be a normal crossing divisor. Then $\{HC_n^{(1)}(rY,Y)\}_r=0$ for 
all $n\ge2$.
\end{prop}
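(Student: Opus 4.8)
The plan is to follow the method of proof of \lemref{lem:sncd}, reducing the computation of the pro cyclic homology to a concrete kernel of the de Rham differential; the essential new point, compared to the lemma, is that for a normal crossing divisor cut out by $f=t_1\cdots t_c$ with $c\ge 2$ the injectivity hypothesis of \lemref{lem:sncd} fails, so this final kernel must be treated directly.

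First I would reduce to a convenient local situation. By the Zariski descent of cyclic homology used in \propref{proposition_smooth_vanishing}, together with the fact that the weight-graded Hochschild and cyclic homology groups are governed by the cotangent complex and hence satisfy \'etale base change, I may check the desired vanishing \'etale-locally; thus I may assume that $X=\Spec R$ with $R$ a regular local $\Q$-algebra and that $Y$ is a \emph{strict} normal crossing divisor, cut out by $f=t_1\cdots t_c$ where $t_1,\dots,t_c$ are part of a regular system of parameters. Using \lemref{lemma_relative_NP} and the reductions of \propref{proposition_smooth_vanishing} and \exref{exm:sncd-1}, I may further pass to a model in which $\Omega_R^1$ contains $dt_1,\dots,dt_c$ as part of a free basis and the derivations $\partial/\partial t_i$ are available.

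Next I would run the homological reduction of \lemref{lem:sncd}, observing that its parts (ii) and (iii) use the injectivity of $d$ only at the very last step. Indeed $\{HH_n^{(1)}(R/f^rR,fR/f^rR)\}_r$ vanishes for all $n\ge 2$: in the short exact sequence $HH_{n+1}^{(1)}(R/fR)\to\{HH_n^{(1)}(R/f^rR,fR/f^rR)\}_r\to\{HH_n^{(1)}(R/f^rR)\}_r$ the right-hand term vanishes by the pro HKR theorem (as $1<n$), and the left-hand term $HH_{n+1}^{(1)}(R/fR)=D_n^1(R/fR)$ vanishes because, $f$ being a single non-zero-divisor, the cotangent complex $\bb L_{R/fR}$ is concentrated in degrees $0$ and $1$; crucially, no hypothesis on $d$ is needed here. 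Feeding this into the weight-graded SBI sequence $HH_n^{(1)}\to HC_n^{(1)}\to HC_{n-2}^{(0)}$ and using that relative weight-$0$ cyclic homology is concentrated in degree $0$, I obtain $\{HC_n^{(1)}(R/f^rR,fR/f^rR)\}_r=0$ at once for $n\ge 3$, and for $n=2$ an isomorphism
\[
\{HC_2^{(1)}(R/f^rR,fR/f^rR)\}_r\isoto\{\ker(d\colon fR/f^rR\to\Omega_R^1\otimes_RR/f^{r-1}R)\}_r .
\]

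The main obstacle is therefore to show that this last kernel vanishes for $f=t_1\cdots t_c$. I would compute it by projecting the de Rham differential onto its $dt_i$-components via $\partial/\partial t_i$, exactly as in \exref{exm:sncd-1}. If $gf$ lies in the kernel then $\partial_{t_i}(gf)=f\,\partial_{t_i}g+g\,(f/t_i)\in f^{r-1}R$ for every $i$, and since $f/t_i=\prod_{j\neq i}t_j$, dividing out the coprime factors forces $t_i\,\partial_{t_i}g+g\in t_i^{r-1}R$. Expanding $g$ in the variable $t_i$ and using that $m+1$ is invertible in characteristic zero, this condition kills every monomial of $g$ of degree $<r-1$ in $t_i$, so $g$ is divisible by $t_i^{r-1}$; combining across all $i$ and using coprimality of the $t_i$ gives $g\in(t_1\cdots t_c)^{r-1}R=f^{r-1}R$, whence $gf=0$ in $fR/f^rR$ and the kernel vanishes. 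The delicate points, which I expect to require the most care, are justifying the \'etale-descent reduction to the strict normal crossing case for the weight-$1$ cyclic homology and organising the simultaneous multivariable divisibility argument so that the coprimality of the $t_i$ and the characteristic-zero input are used cleanly.
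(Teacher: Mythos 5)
Your proposal reaches the right conclusion, and its computational core is sound, but it rests on a false premise that you should correct. You assert that for $f=t_1\cdots t_c$ with $c\ge 2$ ``the injectivity hypothesis of \lemref{lem:sncd} fails.'' It does not: the hypothesis holds for every strict normal crossing divisor, and verifying it is precisely the entire content of the paper's proof. The paper checks that $d\colon fR/f^2R\to\Omega_R^1\otimes_RR/fR$ is injective by factoring it through a commutative square whose left vertical map $fR/f^2R\to\bigoplus_{i=1}^ct_iR/t_i^2R$ is injective because $R$ is a UFD and the $t_i$ are distinct irreducibles, and whose bottom horizontal map is injective by \exref{exm:sncd-1} applied to each $t_i$ separately; it then invokes \lemref{lem:sncd}(iii) as a black box. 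Indeed your own final computation, specialized to $r=2$, proves exactly this hypothesis ($d(gf)\in f\Omega_R^1$ forces $g\in fR$), so your proposal contradicts itself on this point. The false premise also creates a genuine internal tension: your claimed isomorphism $\{HC_2^{(1)}(R/f^rR,fR/f^rR)\}_r\isoto\{\ker(d\colon fR/f^rR\to\Omega_R^1\otimes_RR/f^{r-1}R)\}_r$ requires the relative group $\{HH_1^{(1)}(R/f^rR,fR/f^rR)\}_r$ to embed into $\{\Omega^1_{R/f^rR}\}_r$, which by the long exact sequence needs the vanishing of $HH_2^{(1)}(R/fR)=D_1^1(R/fR)=\ker(d\colon fR/f^2R\to\Omega_R^1\otimes_RR/fR)$ --- the very hypothesis you declared false. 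This is repairable either by reordering (prove your kernel computation first, which establishes the hypothesis, then run the homological reduction), or by only using the injection of $HC_2^{(1)}$ into the kernel of the composite map to the absolute group $\{\Omega^1_{R/f^rR}\}_r$, which needs nothing; but as written the step is inconsistent with your framing.

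Setting the framing aside, your route is a correct, more computational variant of the paper's. Your observation that $\{HH_n^{(1)}(R/f^rR,fR/f^rR)\}_r=0$ for $n\ge2$ needs no hypothesis on $d$ (because $\bb L_{R/fR}$ is concentrated in homological degrees $0$ and $1$) is accurate, as is the SBI reduction; and your multivariable kernel argument (contract with $\partial/\partial t_i$, cancel the coprime factor $\prod_{j\neq i}t_j$, kill low $t_i$-degree terms using invertibility of the integers, then intersect over $i$) is sound once you justify, via \lemref{lemma_relative_NP}, completion to $S[[T_1,\dots,T_c]]$ and faithful flatness, the passage to a model where the derivations $\partial/\partial t_i$ exist --- the same reductions as in \exref{exm:sncd-1}. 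What the paper's approach buys is economy: the coprimality square reduces the multivariable statement to the single-variable case already proved, and the closing induction inside \lemref{lem:sncd}(iii) is exactly what upgrades the $r=2$ injectivity to the vanishing of the pro-kernel for all $r$, so none of your re-derivation is needed. What your approach buys is an explicit, levelwise (not merely pro) vanishing of the kernel, at the cost of repeating arguments the paper already packaged into \lemref{lem:sncd}.
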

\begin{proof}
Since relative cyclic homology (and each of its Adams summands) satisfies \'etale descent 
(see \cite[Theorem~2.9]{CHSW}), the usual induction on the size of an affine 
cover (note that $X$ is quasi-separated) allows us to assume that 
$X=\Spec(R)$ is affine and that $Y$ is a strict normal crossings divisor, 
defined by $f=t_1\cdots t_c\in R$.

Then part (iii) of the previous lemma will complete the proof, as soon as we 
show that the de Rham differential $d:fR/f^2R\to\Omega_{R}^1\otimes_RR/fR$ is 
injective; to check this we may assume that $R$ is local. Considering the 
commutative diagram 
\[\xymatrix{
fR/f^2R \ar[r]^d\ar[d] &\Omega_{R}^1\otimes_RR/fR\ar[d]\\
\bigoplus_{i=1}^ct_iR/t_i^2R \ar[r]^<<<<<{d} &
\bigoplus_{i=1}^c\Omega_{R}^1\otimes_RR/t_iR,
}\]
it is enough to show that the left vertical and bottom horizontal arrows are 
injective. The left vertical arrow is injective since $R$ is a unique 
factorisation domain and the elements $t_1,\dots,t_c$ are all distinct
irreducibles. The bottom horizontal arrow is injective by the previous example.
\end{proof}

\section{The main results}\label{sec:MR}
We now collect together the vanishing results of Section \ref{section_preliminary} to prove
our main theorems.

\subsection{Proof of Theorem \ref{thm:Main-1}}
 Throughout this section we consider a quasi-excellent, 
Noetherian, local $\bb Q$-algebra $A$, an ideal $I\subseteq A$
and a resolution square as in~\eqref{eqn:ABlow-up}.
 
We begin with the following result about the relative $K$-groups (which in fact holds for any ideal of any local Noetherian ring):

\begin{lem}\label{lemma_K_theory}
For any $i> n\ge1$, the group $K_n^{(i)}(A,I)$ is torsion of bounded exponent.
\end{lem}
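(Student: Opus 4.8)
The plan is to compare the relative group $K_n(A,I)$ with the absolute groups $K_n(A)$ and $K_{n+1}(A/I)$ through the long exact sequence of the pair, and to control its high Adams weights using the two cited classical inputs. First I would recall that the Adams operations $\psi^k$ act compatibly on the fibre sequence $K(A,I)\to K(A)\to K(A/I)$, so that the long exact sequence of homotopy groups is $\psi^k$-equivariant and, after inverting a single integer, decomposes into its Adams eigenspaces. Concretely, for each weight $i$ there is, up to bounded torsion, an exact sequence
\[
K_{n+1}^{(i)}(A)\To K_{n+1}^{(i)}(A/I)\xrightarrow{\ \partial\ }K_n^{(i)}(A,I)\To K_n^{(i)}(A).
\]
Thus it suffices to show, up to bounded torsion, that $K_n^{(i)}(A)$ and $\cok\big(K_{n+1}^{(i)}(A)\to K_{n+1}^{(i)}(A/I)\big)$ both vanish for all $i>n$.

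Second, I would invoke Soul\'e's boundedness theorem \cite{Soule1985}: for any commutative ring $R$, any $m\ge1$ and any $i>m$, the eigenspace $K_m^{(i)}(R)$ is annihilated by an integer depending only on $m$ and $i$. Applying this with $m=n$ shows that the right-hand term $K_n^{(i)}(A)$ is bounded torsion for every $i>n$. Applying it with $m=n+1$ shows that $K_{n+1}^{(i)}(A/I)$ is bounded torsion whenever $i>n+1$, which already settles all weights $i\ge n+2$. The only surviving case is the top weight $i=n+1$, where $K_{n+1}^{(n+1)}(A/I)$ need not be torsion, and one must instead show directly that the boundary map $\partial$ vanishes up to bounded torsion.

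For this crucial case I would prove that $K_{n+1}^{(n+1)}(A)\to K_{n+1}^{(n+1)}(A/I)$ is surjective up to bounded torsion. By the theorem of Nesterenko--Suslin \cite{Suslin1989}, the symbol map $K_{n+1}^{M}(R)\to K_{n+1}^{(n+1)}(R)$ is an isomorphism up to bounded torsion for a local ring $R$, naturally in $R$. Since $A\to A/I$ is a surjection of local rings, every unit of $A/I$ lifts to a unit of $A$, so the induced map $K_{n+1}^{M}(A)\to K_{n+1}^{M}(A/I)$ on Milnor $K$-groups is surjective (symbols lift). Combining these gives the desired surjectivity of the top eigenspaces, hence $\partial=0$ up to bounded torsion on $K_{n+1}^{(n+1)}(A/I)$, and therefore $K_n^{(n+1)}(A,I)$ is bounded torsion. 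Together with the previous paragraph this proves the lemma.

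The main obstacle is precisely the top weight $i=n+1$: the outer-term vanishing supplied by Soul\'e does not reach it, so one genuinely needs the Milnor-theoretic description of $K_{n+1}^{(n+1)}$ together with the lifting of units along the surjection $A\to A/I$. A secondary technical point to monitor is the \emph{uniformity} of the torsion bounds: one must check that the integers annihilating the various eigenspaces, as well as the denominators needed to split the eigenspace decomposition along the long exact sequence, can be chosen independently of the chosen element, since this is exactly what \emph{bounded exponent} demands. One should also be careful in applying the Nesterenko--Suslin comparison when $A/I$ has finite residue field, where a standard transfer argument (or the use of improved Milnor $K$-theory) may be required.
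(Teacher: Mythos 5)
Your proposal is correct and follows essentially the same route as the paper's proof: the Adams-eigenspace long exact sequence taken modulo the category of bounded-torsion groups, Soul\'e's boundedness theorem for the outer terms (settling all weights $i>n+1$), and the Nesterenko--Suslin identification $K_{n+1}^{(n+1)}\cong K_{n+1}^M$ together with lifting of units along the surjection of local rings $A\to A/I$ to kill the cokernel in the remaining top weight $i=n+1$. The only caveat is that Soul\'e's theorem in the form you need it should be cited for \emph{local} rings rather than for arbitrary commutative rings, but since $A$ and $A/I$ are both local this does not affect the argument.
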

\begin{proof}
Let $\sT$ be the category of torsion groups of bounded exponent. 
By standard properties of lambda and Adams operators, the sequence of 
relative $K$-groups associated to $A\to A/I$ yields to an exact sequence 
\[
K_{n+1}^{(i)}(A) \To K_{n+1}^{(i)} (A/I) \To K_n^{(i)}(A,I)\To K_n^{(i)}(A)
\] 
in the category of abelian groups modulo $\sT$.

Moreover, since $A$ is local the group $K_n^{(i)}(A)$ is torsion of bounded 
exponent if $i>n$ by Soul\'e \cite[\S2.8]{Soule1985}; the same applies to 
$K_{n+1}^{(i)}(A/I)$ if $i>n+1$. 
This completes the proof in the case $i>n+1$, and shows that 
$K_n^{(n+1)}(A,I)= {\rm Coker}(K_{n+1}^{(n+1)}(A)\to K_{n+1}^{(n+1)}(A/I))$ modulo 
$\sT$. But this cokernel is zero since $K_{n+1}^{(n+1)}(A)=K_{n+1}^M(A)$ and 
$K_{n+1}^{(n+1)}(A/I)=K_{n+1}^M(A/I)$ modulo $\sT$ by 
Nesterenko--Suslin \cite{Suslin1989}.
\end{proof}

\begin{lem}\label{lem:Main-step-1}
Fix $n\ge 0$, and suppose that 
$\{K_{n+1}^{(i)}(rY,Y_{\red})\}_r=0$ for all $i\le n$. Then
\begin{enumerate}
\item the canonical map of pro abelian groups 
$K_{n+1}(X,Y_{\red})\to \{K_{n+1}(rY,Y_{\red})\}_r$ is surjective;
\item assertion (2)$_n$ is true, i.e., 
$K_n(A,I)\to K_n(A/I^r,I/I^r)\oplus K_n(X,Y_{\red})$ is injective for 
$r\gg0$.
\end{enumerate}
\end{lem}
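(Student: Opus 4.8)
The plan is to exploit the pro Mayer--Vietoris sequence \eqref{eqn:pro-ex-2} together with the Adams eigenspace decomposition, using the hypothesis to control the relevant boundary map. The key observation is that the whole argument decomposes into Adams weights, since all the maps in \eqref{eqn:pro-ex-2} respect the $\psi^k$-action, so one may work one weight $i$ at a time.

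For part (i), I would argue as follows. The pro Mayer--Vietoris property gives the long exact sequence \eqref{eqn:pro-ex-2}, in which the relevant fragment reads
\[
K_{n+1}(X,Y_\red)\To \{K_{n+1}(rY,Y_\red)\}_r\stackrel{\bor}\To K_n(A,I).
\]
To prove surjectivity of the first map it suffices to show that $\bor$ vanishes on $\{K_{n+1}(rY,Y_\red)\}_r$. Passing to Adams eigenspaces, I decompose the source $\{K_{n+1}^{(i)}(rY,Y_\red)\}_r$ into weights. By hypothesis this pro group vanishes for every $i\le n$, so the only surviving weights are $i\ge n+1$. On the other hand the target $K_n(A,I)$ is, by \lemref{lemma_K_theory}, supported in weights $i\le n$ up to bounded torsion: for $i>n$ the group $K_n^{(i)}(A,I)$ is torsion of bounded exponent. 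Since a map of pro abelian groups whose source is concentrated in weights $>n$ and whose target is concentrated (modulo bounded torsion) in weights $\le n$ must vanish after accounting for the boundedly-torsion discrepancy, one gets that $\bor$ is zero, hence the first map is surjective. The one delicate point here is the bounded torsion in \lemref{lemma_K_theory}: the cleanest route is to observe that the pro HKR input actually makes $\{K_{n+1}^{(i)}(rY,Y_\red)\}_r$ vanish \emph{integrally} for $i\le n$, and a boundary map from a weight-$(\ge n+1)$ pro group into a boundedly-torsion weight-$(>n)$ target is itself boundedly torsion, which for the purpose of surjectivity of the adjacent map I then need to absorb.

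For part (ii), I would feed the surjectivity just established back into \eqref{eqn:pro-ex-2}. Once $K_{n+1}(X,Y_\red)\oplus\{K_{n+1}(A/I^r,I/I^r)\}_r\to \{K_{n+1}(rY,Y_\red)\}_r$ is surjective (the $X$-summand alone already surjects by part (i)), exactness of the pro Mayer--Vietoris sequence forces the preceding boundary map $\{K_{n+1}(rY,Y_\red)\}_r\to K_n(A,I)$ to be zero, and therefore the next map
\[
K_n(A,I)\To \{K_n(A/I^r,I/I^r)\}_r\oplus K_n(X,Y_\red)
\]
is injective as a map of pro abelian groups. By the convention recorded in \secref{sec:Note}, injectivity of a map of pro abelian groups out of a constant group is equivalent to injectivity of $K_n(A,I)\to K_n(A/I^r,I/I^r)\oplus K_n(X,Y_\red)$ for $r\gg0$, which is precisely assertion (2)$_n$.

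The main obstacle I anticipate is not the homological bookkeeping but rather the interaction between the integral vanishing of the source and the merely-bounded-torsion control of the target in \lemref{lemma_K_theory}. I expect the argument to go through cleanly because the hypothesis $\{K_{n+1}^{(i)}(rY,Y_\red)\}_r=0$ is assumed for all $i\le n$ with no torsion caveat, so the boundary map is supported entirely in weights $\ge n+1$ where its image lands in the boundedly-torsion part of $K_n(A,I)$; verifying that this forces the boundary to vanish (rather than merely be boundedly torsion) will require either that part (i) only needs surjectivity up to bounded torsion, or a separate observation that the relevant pro group is in fact torsion-free in the surviving weights. I would make this weight comparison precise as the technical core of the proof.
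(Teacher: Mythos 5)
Your overall strategy---killing a boundary map by playing the Adams weights of its source against those of its target via \lemref{lemma_K_theory}---is indeed the paper's strategy, but as written your proof has two genuine gaps.

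First, the exact sequence you invoke cannot prove part (i). In \eqref{eqn:pro-ex-2} the term mapping onto $\{K_{n+1}(rY,Y_{\red})\}_r$ is the direct sum $\{K_{n+1}(A/I^r,I/I^r)\}_r\oplus K_{n+1}(X,Y_{\red})$, not $K_{n+1}(X,Y_{\red})$ alone; so even granting that the Mayer--Vietoris boundary $\bor$ into $K_n(A,I)$ vanishes, exactness only yields surjectivity of the sum map, and nothing in that sequence lets you isolate the $X$-summand (your part (ii) write-up in fact assumes the $X$-summand surjects ``by part (i)'', which is circular). The paper proves (i) from a different sequence, namely the long exact sequence of the triple $Y_{\red}\subset rY\subset X$,
\[
K_{n+1}(X,Y_{\red})\To \{K_{n+1}(rY,Y_{\red})\}_r\To \{K_n(X,rY)\}_r,
\]
and kills \emph{that} boundary: the pro Mayer--Vietoris property identifies $\{K_n(X,rY)\}_r$ with $\{K_n(A,I^r)\}_r$, so \lemref{lemma_K_theory} (which holds for any ideal, in particular for the $I^s$) again shows that the weight $i>n$ part of the target is torsion of bounded exponent. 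Your argument needs to be restructured around this sequence to obtain (i); note that part (ii) does follow from your Mayer--Vietoris argument alone, since there only surjectivity of the sum map is needed.

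Second, the point you defer as ``the technical core''---why an image that is simultaneously concentrated in weights $\ge n+1$ and landing in bounded torsion must actually be zero---is left unresolved, and neither of your proposed fixes works. Weakening (i) to surjectivity up to bounded torsion does not prove the lemma as stated, and torsion-freeness of the source in the surviving weights is not the relevant property: a torsion-free group can surject onto a group of bounded exponent (e.g.\ $\Z\to\Z/2$). The missing ingredient is Goodwillie's theorem: since $rY$ is a nilpotent thickening of $Y_{\red}$ over $\Q$, each $K_{n+1}(rY,Y_{\red})$ is a $\Q$-vector space, and in particular decomposes as the finite direct sum $\bigoplus_{i=0}^{n+1+\dim Y}K_{n+1}^{(i)}(rY,Y_{\red})$ of its Adams eigenspaces. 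Consequently, after passing deep enough into the pro system that the transition maps land in the weight $\ge n+1$ part, the image of the boundary is the image of a $\Q$-vector space, hence divisible; a divisible subgroup of a group of bounded exponent is zero. This divisibility-versus-bounded-exponent clash is exactly how the paper closes the argument, and without it your proof does not go through.
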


\begin{proof}
(i): From the long exact sequence 
\[
\cdots\To K_{n+1}(X,Y_\sub{red})\To 
\{K_{n+1}(rY,Y_\sub{red})\}_r\xto{\{\bor_r\}_r}\{K_n(X,rY)\}_r\To\cdots,
\] we see that it is necessary and sufficient to prove that the boundary map 
$\{\bor_r\}_r$ is zero.

The square~\eqref{eqn:ABlow-up} satisfies the pro Mayer--Vietoris property by the results recalled in Section \ref{section_assertions}, and so the canonical map $\{K_n(A,I^r)\}_r\to\{K_n(X,rY)\}_r$ is an isomorphism. Passing to Adams eigenspaces yields isomorphisms $\{K_n^{(i)}(A,I^r)\}_r\to\{K_n^{(i)}(X,rY)\}_r$. The surjectivity of this latter map means that if we fix any $s\ge 1$, then there exists $s'\ge s$ such that \[\Im(K_n^{(i)}(X,s'Y)\to K_n^{(i)}(X,sY))\subseteq \Im(K_n^{(i)}(A,I^s)\to K_n^{(i)}(X,sY)).\] 
It follows from Lemma \ref{lemma_K_theory} that the right, and hence the left, 
side is a torsion group of bounded exponent if $i>n$.

Since $rY$ is a nilpotent thickening of $Y_\sub{red}$, the relative $K$-group $K_{n+1}(rY,Y_\sub{red})$ is a $\bb Q$-vector space which decomposes as a direct sum $\bigoplus_{i=0}^{n+1+\dim Y}K_{n+1}^{(i)}(rY,Y_\sub{red})$.
Our hypothesis is that this decomposition, as a pro abelian group over $r\ge1$, has no component in degrees $i\le n$. Hence there exists $s''\ge s'$ such that the canonical map $K_{n+1}(s''Y,Y_\sub{red})\to K_{n+1}(s'Y,Y_\sub{red})$ has image in $\bigoplus_{i={n+1}}^{n+1+\dim Y}K_{n+1}^{(i)}(s'Y,Y_\sub{red})$.

Assembling the conclusions of the two previous paragraphs, and noting that the boundary maps $\bor_r$ respect the Adams operators, we see that the image of the composition \[K_{n+1}(s''Y,Y_\sub{red})\To K_{n+1}(s'Y,Y_\sub{red})\xto{\bor_{s'}} K_n(X,s'Y)\To K_n(X,sY)\] is both divisible (being the image of a $\bb Q$-vector space) and a torsion group of bounded exponent; hence the image is zero. This means that the map $\{\bor_r\}_r$ of pro abelian groups is zero.
Claim (ii) follows from (i) and the Mayer--Vietoris sequence~\eqref{eqn:pro-ex-2}.
\end{proof}

The following is the first main result of this note, proving assertion (2)$_n$ in certain cases:

\begin{thm}\label{theorem_main}
Assume either that $n = 2$, or that $Y_{\red}$ is regular and $n\ge 2$. 
Then the canonical sequence of pro abelian groups 
\[
0\To K_n(A,I)\to \{K_n(A/I^r,I/I^r)\}_r\oplus K_n(X,Y_{\red})\To 
\{K_n(r Y,Y_{\red})\}_r\To 0
\]
is short exact.
\end{thm}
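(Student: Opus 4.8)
The plan is to deduce the short exact sequence from the pro Mayer--Vietoris sequence \eqref{eqn:pro-ex-2}, which is exact at its middle term because the resolution square has the pro Mayer--Vietoris property (recall that $A$ is quasi-excellent). It therefore remains only to establish left exactness, i.e. that $K_n(A,I)\to\{K_n(A/I^r,I/I^r)\}_r\oplus K_n(X,Y_{\red})$ is injective, and right exactness, i.e. that the following map onto $\{K_n(rY,Y_{\red})\}_r$ is surjective. Both are governed by Lemma \ref{lem:Main-step-1}: injectivity is exactly assertion (2)$_n$, which is part (ii) of the lemma applied at index $n$, while surjectivity follows from part (i) applied at index $n-1$, since surjectivity of $K_n(X,Y_{\red})\to\{K_n(rY,Y_{\red})\}_r$ already forces the map out of the direct sum to be surjective. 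Thus everything reduces to verifying the hypothesis of Lemma \ref{lem:Main-step-1} at the two consecutive indices $n$ and $n-1$, namely $\{K_{n+1}^{(i)}(rY,Y_{\red})\}_r=0$ for $i\le n$ and $\{K_n^{(i)}(rY,Y_{\red})\}_r=0$ for $i\le n-1$.

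To test these I would pass to cyclic homology. As $rY$ is a nilpotent thickening of $Y_{\red}$ over $\Q$, Goodwillie's theorem supplies Adams-compatible isomorphisms $\{K_{m+1}^{(i)}(rY,Y_{\red})\}_r\cong\{HC_m^{(i-1)}(rY,Y_{\red})\}_r$, under which the two required vanishings become $\{HC_n^{(j)}(rY,Y_{\red})\}_r=0$ for $j\le n-1$ and $\{HC_{n-1}^{(j)}(rY,Y_{\red})\}_r=0$ for $j\le n-2$. The weight-zero terms cost nothing, since $HC_m^{(0)}$ agrees with $HH_m^{(0)}$ and hence vanishes for $m\ge1$. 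When $Y_{\red}$ is regular and $n\ge2$, both statements are precisely Proposition \ref{proposition_smooth_vanishing} read off in homological degrees $n$ and $n-1$, and the theorem follows at once.

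The substantive case is $n=2$ with $Y_{\red}$ arbitrary. Here the index-$1$ hypothesis collapses to $\{HC_1^{(0)}(rY,Y_{\red})\}_r=0$, which holds automatically, so right exactness is free for every resolution; the whole difficulty sits in the index-$2$ hypothesis, which after discarding the automatic weight-zero piece is the single assertion $\{HC_2^{(1)}(rY,Y_{\red})\}_r=0$. This is exactly what Proposition \ref{proposition_cyclic} delivers when $Y_{\red}$ is a normal crossing divisor, so the plan is to reduce to that case. Invoking resolution of singularities for quasi-excellent $\Q$-schemes, I would choose a proper birational $\pi\colon X'\to X$ with $X'$ regular, an isomorphism over $X\setminus Y$, for which the new fibre $Y':=X'\times_AA/I$ has reduced part $Y'_{\red}$ a strict normal crossing divisor; then $X'\to\Spec A$ is again a resolution square. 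Since the pro-system $\{rY'\}_r$ is cofinal with the system of reduced-divisor powers to which Proposition \ref{proposition_cyclic} literally applies, it yields $\{HC_2^{(1)}(rY',Y'_{\red})\}_r=0$, whence assertion (2)$_2$ for $X'$ by Lemma \ref{lem:Main-step-1}(ii). Finally the map of pairs $(X',Y'_{\red})\to(X,Y_{\red})$ makes the (injective) candidate map for $X'$ factor the candidate map for $X$, so functoriality transports injectivity back to the original resolution; together with the automatic right exactness this gives the short exact sequence for $X$.

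The hard part will be this reduction in the case $n=2$: proving $\{HC_2^{(1)}(rY,Y_{\red})\}_r=0$ when $Y_{\red}$ is neither regular nor even a divisor. This is what forces the detour through a normal-crossings resolution, and with it the appeal to resolution of singularities in the quasi-excellent $\Q$-setting, the cofinality comparison between the thickenings $X'\times_AA/I^r$ and the literal reduced-divisor powers, and the functoriality argument carrying injectivity from $X'$ down to $X$.
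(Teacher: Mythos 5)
Your proposal is correct and follows essentially the same route as the paper's own proof: reduction to Lemma \ref{lem:Main-step-1} via Goodwillie's theorem, with Proposition \ref{proposition_smooth_vanishing} handling the case of regular $Y_{\red}$ and, for $n=2$, a Temkin resolution making the exceptional locus a normal crossing divisor so that Proposition \ref{proposition_cyclic} applies, followed by the same factorization argument transporting injectivity from $(X',Y'_{\red})$ back to $(X,Y_{\red})$. The only (equally valid) deviation is in establishing right exactness when $n=2$: you obtain surjectivity onto $\{K_2(rY,Y_{\red})\}_r$ from Lemma \ref{lem:Main-step-1}(i) at index $1$, using the automatic vanishing of $\{HC_1^{(0)}(rY,Y_{\red})\}_r$, whereas the paper instead deduces it from assertion (2)$_1$ of Example \ref{example01} combined with the Mayer--Vietoris sequence \eqref{eqn:pro-ex-2}.
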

\begin{proof}
If $Y_\sub{red}$ is regular then it follows from Proposition \ref{proposition_smooth_vanishing} and the Goodwillie isomorphism $K_{n+1}^{(i+1)}(rY,Y_\sub{red}) \cong HC_n^{(i)}(rY,Y_\sub{red})$ that $\{K_{n+1}^{(i+1)}(rY,Y_\sub{red})\}_r$ 
vanishes for $i<n$. From \lemref{lem:Main-step-1}, it then follows that~\eqref{eqn:pro-ex-2}
breaks into short exact sequences in positive degrees, as required.

Dropping the regularity hypothesis on $Y_\sub{red}$, it follows from 
resolution of 
singularities (specifically, since we are not imposing any finite type 
hypotheses, from \cite[Thm.~2.3.6]{Temkin}) that there is an abstract blow-up 
square
\[\xymatrix{
Y'\ar[r]\ar[d] & X'\ar[d]\\
Y\ar[r] & X
}\]
in which $X'$ is regular and $Y'_\sub{red}$ is a normal crossing divisor on $X$. Repeating the argument of the previous paragraph with $X,Y$ replaced by $X', Y'$, Proposition~\ref{proposition_smooth_vanishing} replaced by Proposition~\ref{proposition_cyclic}, and $n$ replaced by $2$, it follows that $K_2(A,I)\to K_2(A/I^r,I/I^r)\oplus K_2(X',Y'_\sub{red})$ is injective for $r\gg1$. Hence the map $K_2(A,I)\to K_2(A/I^r,I/I^r)\oplus K_2(X,Y_\sub{red})$ is certainly injective. 
The desired short exact sequence now follows from~\eqref{eqn:pro-ex-2}
and assertion (2)$_1$, which was proved in Example~\ref{example01}.
\end{proof}

\subsection{Proof of Theorem \ref{thm:Cone-Intro}}
Let $k$ be a field of characteristic zero and $Y\into \P_k^N$ a smooth 
projective variety. Let $C\into\bb A_k^{N+1}$ be the cone over $Y$, and let 
$(A,\frak m)$ be the local ring at the unique singular point of $C$. In the course of proving our second main result, we will see that assertions (1)$_n$--(4)$_n$ are all true for the usual resolution of $A$:

\begin{thm}\label{thm:Cone-final}
For any $n\ge0$, the map 
\[
K_n(A)\To K_n(A/\fm^r)\oplus K_n(\Spec(A) \setminus\{\fm\})\] is injective for 
$r\gg 0$.
\end{thm}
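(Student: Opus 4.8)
The plan is to establish all four assertions (1)$_n$--(4)$_n$ for the standard resolution of $A$; the statement of the theorem is precisely (3)$_n$. First I fix the geometry. Let $X$ be the blow-up of $\Spec(A)$ at its closed point, i.e.\ the restriction to the local scheme of the total space of the line bundle $\roi_Y(-1)$ over $Y$, with exceptional divisor the zero section $\iota\colon Y\inj X$ and bundle projection $\pi\colon X\to Y$ satisfying $\pi\iota=\id_Y$; since $Y$ is smooth, $X$ is regular and $Y_{\red}=Y$ is regular. As $A$ is essentially of finite type over the characteristic-zero field $k$, it is a quasi-excellent $\Q$-algebra, so the resulting resolution square satisfies the pro Mayer--Vietoris property and every implication of Lemma~\ref{lemma_equivalences} is at my disposal. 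The cases $n=0,1$ are already covered by Example~\ref{example01}, so I may assume $n\ge 2$ throughout.

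Next I produce (1)$_n$. Because $Y_{\red}=Y$ is regular and $n\ge 2$, Theorem~\ref{theorem_main} applies verbatim and gives assertion (2)$_n$. To upgrade this to (1)$_n$ I invoke Lemma~\ref{lemma_equivalences}(iii), whose only hypothesis is the surjectivity of $K_{n+1}(X)\to K_{n+1}(Y)$. This is immediate for a cone resolution: the retraction $\pi$ shows that $\pi^*$ splits the restriction $\iota^*\colon K_{n+1}(X)\to K_{n+1}(Y)$, so the latter is (split) surjective. Hence (1)$_n$ holds.

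It then remains to prove (4)$_n$, since Lemma~\ref{lemma_equivalences}(ii) turns (1)$_n$ and (4)$_n$ into (3)$_n$, at which point all four assertions are in hand. To attack (4)$_n$ I would use that $X$ is regular and $Y\inj X$ is a regular Cartier divisor with complement $U=X\setminus Y=\Spec(A)\setminus\{\fm\}$: d\'evissage identifies the $K$-theory with supports in $Y$ with $K_n(Y)$ and yields the localization sequence $K_n(Y)\xrightarrow{i_*}K_n(X)\xrightarrow{j^*}K_n(U)$, so that $\ker(j^*)=\im(i_*)$. Thus (4)$_n$ reduces to the claim that the pro-restriction $\{K_n(X)\to K_n(rY)\}_r$ is injective on $\im(i_*)$, i.e.\ that $\rho_r(i_*\beta)=0$ for all $r$ forces $i_*\beta=0$. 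Restricting $i_*\beta$ to $rY$ produces two pieces relative to the splitting $K_n(rY)=\pi^*K_n(Y)\oplus K_n(rY,Y)$: the reduced part is the self-intersection $\iota^*i_*\beta=(1-[\roi_Y(1)])\cap\beta$, while the nilpotent part lives in the relative groups $\{K_n(rY,Y)\}_r$, which by Proposition~\ref{proposition_smooth_vanishing} (applicable since $Y$ is regular) and the Goodwillie isomorphism are concentrated in Adams weights $\ge n$.

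The crux --- and the step I expect to be the main obstacle --- is precisely this last claim, that the infinitesimal thickenings $rY$ together detect the divisor-supported classes $\im(i_*)$. Neither piece suffices alone: the Lefschetz-type operator $\beta\mapsto(1-[\roi_Y(1)])\cap\beta$ is not injective on all of $K_n(Y)$, and the weight bound only constrains the nilpotent part. The essential input must be the ampleness of the conormal bundle $\roi_Y(1)$ (equivalently, the negativity of $N_{Y/X}$ that makes $Y$ contractible): the associated graded of the thickenings is governed by the positive twists $\bigoplus_d\roi_Y(d)\cap\beta$, and combining the vanishing of the reduced self-intersection with the weight-graded behaviour of these positive twists should force $\beta$, and hence $i_*\beta$, to vanish, after discarding the bounded torsion exactly as in Lemma~\ref{lemma_K_theory}. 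Once (4)$_n$ is secured, the theorem follows formally from Lemma~\ref{lemma_equivalences}(ii).
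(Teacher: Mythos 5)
Your first half coincides exactly with the paper's proof: same resolution $X=\check C\times_C\Spec(A)$, same appeal to Theorem~\ref{theorem_main} for (2)$_n$ (using that $Y$ is regular), same splitting $K_{n+1}(X)\to K_{n+1}(Y)$ via $X\to\check C\xto{\pi}Y$ to get (1)$_n$ from Lemma~\ref{lemma_equivalences}(iii), and same reduction of the theorem, i.e.\ (3)$_n$, to (4)$_n$ via Lemma~\ref{lemma_equivalences}(ii). The genuine gap is that you never prove (4)$_n$: you yourself flag it as ``the main obstacle'' and offer only a heuristic in which ampleness of $\roi_Y(1)$ ``should force'' vanishing. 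Beyond being incomplete, the sketched mechanism is unpromising. The operator $\beta\mapsto(1-[\roi_Y(1)])\cap\beta$ is nilpotent, since $1-[\roi_Y(1)]$ lies in the augmentation ideal of $K_0(Y)$ and $Y$ is finite-dimensional, so the ``reduced part'' of your decomposition carries essentially no information; you establish no actual relation between the components of $i_*\beta$ in $\{K_n(rY,Y)\}_r$ and the twists $[\roi_Y(d)]\cap\beta$; and in any case your goal of forcing $\beta=0$ is both stronger than needed and generally false, since $i_*$ may have a kernel --- only $i_*\beta=0$ can be expected.

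What you are missing is the paper's key observation, which makes (4)$_n$ elementary and uses neither infinitesimal thickenings, Adams weights, nor any intersection theory: compare the localization sequence $K_n(Y)\xto{\sigma_*}K_n(X)\to K_n(X\setminus Y)$ with the corresponding sequence $K_n(Y)\xto{\sigma_*}K_n(\check C)\to K_n(\check C\setminus Y)$ for the \emph{global} line bundle $\check C\to Y$; flat base change along $X\to\check C$ makes these sequences commute with the evident vertical maps. If $\al\in K_n(X)$ dies in $K_n(X\setminus Y)$, then $\al\in\im(\sigma_*)$, and since $\sigma_*\colon K_n(Y)\to K_n(X)$ factors through $K_n(\check C)$, the class $\al$ lies in the image of $K_n(\check C)\to K_n(X)$. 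But the composite $K_n(\check C)\to K_n(X)\to K_n(Y)$ is restriction along the zero section of $\check C\to Y$, hence an isomorphism by homotopy invariance of $K$-theory of the regular scheme $\check C$. So if $\al$ also dies in $K_n(Y)$, then $\al=0$. This proves the stronger statement that $K_n(X)\to K_n(Y)\oplus K_n(X\setminus Y)$ is injective --- with the reduced $Y$, no thickening required --- which gives (4)$_n$, and then your (correct) formal reductions finish the theorem.
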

\begin{proof}
Let $\check C\to C$ be the usual resolution of singularities, which is a line 
bundle over $Y$ in such a way that the zero section $\sigma:Y\into\check C$ 
is exactly the exceptional fibre of $\check C\to C$ over the singular point of
$C$.

Let $X=\check C\times_C\Spec(A)$ be the associated resolution of singularities 
of $\Spec(A)$. Diagrammatically we have the following Cartesian squares:
\[\xymatrix{
X\times_{\Spec(A)} \Spec(A/\fm)\ar[r]\ar[d] & X\ar[r]\ar[d] & \check C\ar[d]\\
\Spec(A/\fm) \ar[r] & \Spec(A)\ar[r] & C 
}\]
in which $(X\times_{\Spec(A)} \Spec(A/\fm))_\sub{red}=Y$ and the left square and 
outer rectangle are abstract blow-up squares. The left square, which is the usual resolution square for $A$, satisfies 
(2)$_n$ by Theorem \ref{theorem_main}.

Since the canonical inclusion $Y\inj X$ is split by 
$X\to \check C\xto{\pi}Y$, where $\pi$ denotes the line bundle structure map, 
we see that $K_{n+1}(X)\to K_{n+1}(Y)$ is surjective. So it follows from 
Lemma \ref{lemma_equivalences}(iii) that the left square satisfies (1)$_n$. Since we wish to prove (3)$_n$, it is now enough by Lemma \ref{lemma_equivalences}(ii) to prove that the left 
square satisfies (4)$_n$, which we will do by showing that 
$K_n(X)\to K_n(Y)\oplus K_n(X\setminus Y)$ is injective.

Suppose that $\al\in K_n(X)$ dies in $K_n(X\setminus Y)$. By comparing the 
localisation sequences
\[\xymatrix{
K_n(Y)\ar[r]^{\sigma_*}\ar@{=}[d] & K_n(\check C)\ar[r]\ar[d] & 
K_n(\check C\setminus Y)\ar[d]\\
K_n(Y)\ar[r]_{\sigma_*} & K_n(X)\ar[r] & K_n(X\setminus Y),
}\]
we see that $\al$ is in the image of $K_n(\check C)\to K_n(X)$. But  the 
composition $K_n(\check C)\to K_n(X)\to K_n(Y)$ is an isomorphism since 
$\check C$ is a line bundle over $Y$; so if we now also assume that $\al$ dies 
in $K_n(Y)$, then it follows that $\al=0$. This completes the proof.
\end{proof}

\end{document}